\newtheorem{prop}{Proposition}
\newtheorem{thm}{Theorem}
\newtheorem{cor}{Corollary}
\newtheorem{lemma}{Lemma}
\theoremstyle{definition}
\newtheorem{defn}{Definition}
\newtheorem{example}{Example}
\newcommand\C{{\mathbb C}}
\newcommand\N{{\mathbb N}}
\newcommand{\cN}{{\mathcal N}}
\newcommand\IH{{\mathbb H}}
\newcommand{\Ti}{\Theta}
\newcommand\W{{\mathrm W}}
\newcommand\cC{{\mathcal C}}
\newcommand\cA{{\mathcal A}}
\newcommand\tm{{\mathrm{t}}}
\newcommand\Z{{\mathbb Z}}
\newcommand\cP{{\mathcal P}}
\newcommand\AS{{\mathfrak S}}
\newcommand\al{\alpha}
\newcommand{\be}{\beta}
\newcommand\la{\lambda}
\newcommand\ssm{\smallsetminus}
\newcommand\noin{\noindent}
\newcommand\bull{{\scriptscriptstyle \bullet}}
\newcommand\eqto{\stackrel{\lower1.5pt\hbox{$\scriptstyle\sim\,$}}\to}
\newcommand\ov{\overline}
\DeclareMathOperator{\Ker}{Ker}
\DeclareMathOperator{\Pf}{Pfaffian}
\DeclareMathOperator{\Sp}{Sp}
\DeclareMathOperator{\LG}{LG}
\DeclareMathOperator{\IG}{IG}
\DeclareMathOperator{\OG}{OG}
\DeclareMathOperator{\HH}{\mathrm{H}}
\newcommand{\ignore}[1]{}
\newcommand{\pic}[2]{\includegraphics[scale=#1]{#2}}
\begin{document}

\title[Double theta polynomials and equivariant Giambelli formulas]
{Double theta polynomials and equivariant Giambelli formulas}

\date{December 2, 2015}

\author{Harry~Tamvakis} \address{University of Maryland, Department of
Mathematics, 1301 Mathematics Building, College Park, MD 20742, USA}
\email{harryt@math.umd.edu}

\author{Elizabeth~Wilson} \address{University of Maryland, Department of
Mathematics, 1301 Mathematics Building, College Park, MD 20742, USA}
\email{bethmcl@math.umd.edu}

\subjclass[2010]{Primary 14N15; Secondary 05E15, 14M15}

\thanks{The first author was supported in part by NSF Grant DMS-1303352.}

\begin{abstract}
We use Young's raising operators to introduce and study {\em double
  theta polynomials}, which specialize to both the theta polynomials
of Buch, Kresch, and Tamvakis, and to double (or factorial) Schur
$S$-polynomials and $Q$-polynomials. These double theta polynomials
give Giambelli formulas which represent the equivariant Schubert
classes in the torus-equivariant cohomology ring of symplectic
Grassmannians, and we employ them to obtain a new presentation of this
ring in terms of intrinsic generators and relations.
\end{abstract}

\maketitle

\setcounter{section}{-1}


\section{Introduction}

Fix the nonnegative integer $k$ and let $\IG=\IG(n-k,2n)$ be the
Grassmannian which parametrizes isotropic subspaces of dimension $n-k$
in the vector space $\C^{2n}$, equipped with a symplectic form. In
2008, Buch, Kresch, and Tamvakis \cite{BKT2} introduced the theta
polynomials $\Ti_\la(c)$, a family of polynomials in independent
variables $c_p$, defined using Young's raising operators
\cite{Y}. When the $c_p$ are mapped to the special Schubert classes,
which are the Chern classes of the tautological quotient vector bundle
over $\IG$, then the $\Ti_\la(c)$ represent the Schubert classes in
$\HH^*(\IG,\Z)$. These theta polynomials give a combinatorially
explicit, intrinsic solution to the {\em Giambelli problem} for the
cohomology ring of $\IG$.

This paper is concerned with Giambelli formulas for the {\em
  equivariant Schubert classes} in the equivariant cohomology ring
$\HH^*_T(\IG)$, where $T$ denotes a maximal torus of the complex
symplectic group.  As explained in \cite{Gr} and \cite{T3}, such
formulas are equivalent to corresponding ones within the framework of
degeneracy loci of vector bundles \cite{F}. In 2009, the theta
polynomials $\Ti_\la(c)$ were extended to obtain representing
polynomials in the torus-equivariant cohomology ring of $\IG$, and
more generally, of any isotropic partial flag variety, by the first
author \cite{T2}. The {\em equivariant Giambelli problem} was thus
solved in a uniform manner for any classical $G/P$ space, in terms of
positive combinatorial formulas which are {\em native to $G/P$}. See
\cite{T3} for an exposition of this work.

It has been known for some time (cf.\ \cite{KL,Ka, Ik, Mi2}) that the
equivariant Schubert classes of the usual (type A) Grassmannian and of
the Lagrangian Grassmannian $\LG(n,2n)$ may be represented by
Jacobi-Trudi type determinants and Schur Pfaffians which generalize
the classical results of \cite{G, P}. The polynomials which appear in
these formulas are called double (or factorial) Schur $S$-polynomials
and $Q$-polynomials, respectively (cf.\ \cite{BL, I}). Our aim here is
to use raising operators to define {\em double theta polynomials}
$\Ti_\la(c\, |\, t)$, which advance the theory of the single theta
polynomials $\Ti_\la(c)$ from \cite{BKT2} to the equivariant setting,
and specialize to both of the aforementioned versions of double Schur
polynomials. The $\Ti_\la(c\, |\, t)$ represent the equivariant
Schubert classes on $\IG$, but differ from the equivariant Giambelli
polynomials given in \cite{T2}.  It follows that the two theories must
agree up to the ideal $I^{(k)}$ of {\em classical relations}
(\ref{relations}) among the variables $c_p$. We discuss this in detail
in \S\ref{compare} and Corollary \ref{comp}.

For the rest of this paper, $c=(c_1,c_2,\ldots)$ and
$t=(t_1,t_2,\ldots)$ will denote two families of commuting
variables. We set $c_0=1$ and $c_p=0$ for any $p<0$.  For any integers
$j\geq 0$ and $r\geq 1$, define the elementary and complete symmetric
polynomials $e_j(t_1,\ldots,t_r)$ and $h_j(t_1,\ldots,t_r)$ by the
generating series
\[
\prod_{i=1}^r(1+t_iz) = \sum_{j=0}^{\infty}
e_j(t_1,\ldots,t_r)z^j \ \ \ \text{and} \ \ \ 
\prod_{i=1}^r(1-t_iz)^{-1} = \sum_{j=0}^{\infty}
h_j(t_1,\ldots,t_r)z^j,
\]
respectively. Let $e^r_j(t):=e_j(t_1,\ldots,t_r)$,
$h^r_j(t):=h_j(t_1,\ldots,t_r)$, and
$e^0_j(t)=h^0_j(t):=\delta_{0j}$, where $\delta_{0j}$ denotes the
Kronecker delta. Furthermore, if $r<0$ then define
$h^r_j(t):=e^{-r}_j(t)$.

We will work throughout with integer sequences $\al =
(\al_1,\al_2,\ldots)$ which are assumed to have finite support, when
they appear as subscripts. For any positive integers $i<j$ and integer
sequence $\alpha$, define the operator $R_{ij}$ by
$$R_{ij}(\alpha) := (\alpha_1,\ldots,\alpha_i+1,\ldots,\alpha_j-1,
\ldots).$$ A {\em raising operator} $R$ is any monomial in the basic
operators $R_{ij}$. The sequence $\al$ is a {\em composition} if
$\al_i\geq 0$ for all $i$, and a {\em partition} if $\al_i\geq
\al_{i+1}\geq 0$ for all $i\geq 1$. As is customary, we will 
identify partitions with their Young diagram of boxes.

Fix the nonnegative integer $k$. For any integers $p$ and $r$, define
\[
c^r_p:= \sum_{j=0}^p c_{p-j} \, h^r_j(-t)
\]
and for any integer sequences $\al$ and $\be$, let
\[
c^\be_{\al}:=\prod_{i \geq 1} c^{\be_i}_{\al_i} 
\ \ \ \text{and} \ \ \ 
c_\al := c_{\al}^0 = \prod_{i\geq 1} c_{\al_i}.  
\]
Given any raising operator $R$, define $R\, c^\be_{\al} :=
c^\be_{R\al}$.  It is important that the variables
$c^{\be_i}_{\al_i}$ in the monomials $c^\be_{\al}$ are regarded
as {\em noncommuting} for the purposes of this action.

We say that a partition $\la$ is {\em $k$-strict} if all parts $\la_i$
which are strictly greater than $k$ are distinct. To any such $\la$,
we attach a finite set of pairs 
\[
\cC(\la) :=\{ (i,j)\in \N\times\N \ |\ 1\leq i<j \ \ \text{and} \ \ 
\la_i+\la_j > 2k+j-i\}
\]
and a sequence $\beta(\la)=\{\beta_j(\la)\}_{j\geq 1}$ defined by
\[
\be_j(\la):=k+1-\la_j+\#\{i<j\ |\ (i,j)\notin \cC(\la)\}, \ \ \text{for
  all} \ \ j\geq 1.
\]
Following \cite{BKT2}, consider the raising operator expression
$R^\la$ given by
\[
R^{\la} := \prod_{i<j}(1-R_{ij})\prod_{(i,j)\in\cC(\la)}
(1+R_{ij})^{-1}.
\]

\begin{defn}
\label{Thdef}
For any $k$-strict partition $\la$, the {\em double theta polynomial} 
$\Ti_\la(c \, |\, t)$ is defined by 
\[
\Ti_\la(c \, |\, t) := R^\la\,c^{\be(\la)}_{\la}.
\]
The single theta polynomial $\Ti_\la(c)$ of \cite{BKT2} is given by
\[
\Ti_\la(c)=\Ti_\la(c \, |\, 0) = R^\la\, c_\la.
\] 
\end{defn}

We next relate the double theta polynomials $\Ti_\la(c\, |\, t)$ to
the torus-equivariant Schubert classes on $\IG$. Let
$\{e_1,\ldots,e_{2n}\}$ denote the standard symplectic basis of
$\C^{2n}$ and let $F_i$ be the subspace spanned by the first $i$
vectors of this basis, so that $F_{n-i}^\perp = F_{n+i}$ for $0\leq i
\leq n$. Let $B_n$ denote the stabilizer of the flag $F_\bull$ in the
symplectic group $\Sp_{2n}(\C)$, and let $T_n$ be the associated
maximal torus in the Borel subgroup $B_n$. The $T_n$-equivariant
cohomology ring $\HH^*_{T_n}(\IG(n-k,2n),\Z)$ is defined as the
cohomology ring of the Borel mixing space $ET_n\times^{T_n}\IG$. The
Schubert varieties in $\IG$ are the closures of the $B_n$-orbits, and
are indexed by the $k$-strict partitions $\la$ whose Young diagram
fits in an $(n-k)\times (n+k)$ rectangle. Any such $\la$ defines a
Schubert variety $X_\lambda=X_\la(F_\bull)$ of codimension
$|\la|:=\sum_i\la_i$ by
\begin{equation}
\label{Xlam}
   X_\lambda := \{ \Sigma \in \IG \mid \dim(\Sigma \cap
   F_{n+\beta_j(\lambda)}) \geq j \ \ \ \forall\, 1 \leq j \leq
   n-k \} \,.
\end{equation}
Since $X_\la$ is stable under the action of $T_n$, we obtain 
an {\em equivariant Schubert class} 
$[X_\la]^{T_n}:=[ET_n\times^{T_n}X_\la]$ in $\HH^*_{T_n}(\IG(n-k,2n))$.

Following \cite[\S 5.2]{BKT2} and \cite[\S 5]{IMN1}, we consider the
{\em stable} equivariant cohomology ring of $\IG(n-k,2n)$, denoted by
$\IH_T(\IG_k)$. The latter is defined by
\[
\IH_T(\IG_k) := \lim_{\longleftarrow}\HH_{T_n}^*(\IG(n-k,2n)),
\]
where the inverse limit of the system
\begin{equation}
\label{embeddings}
\cdots \rightarrow \HH^*_{T_{n+1}}(\IG(n+1-k,2n+2)) \rightarrow
\HH^*_{T_n}(\IG(n-k,2n))\rightarrow \cdots
\end{equation}
is taken in the category of graded algebras.  The surjections
(\ref{embeddings}) are induced from the natural inclusions
$W_n\hookrightarrow W_{n+1}$ of the Weyl groups of type C, as in
\cite[\S 2]{BH}, \cite[\S 10]{IMN1}, and \S \ref{trans} of the present
work. Moreover, the variables $t_i$ are identified with the characters
of the maximal tori $T_n$ in a compatible fashion, as in
loc.\ cit. One then has that $\IH_T(\IG_k)$ is a free $\Z[t]$-algebra
with a basis of stable equivariant Schubert classes
$$\sigma_\la:=\lim_{\longleftarrow}[X_\la]^{T_n},$$ one for every
$k$-strict partition $\la$. We view $\HH_{T_n}^*(\IG(n-k,2n))$ as a
$\Z[t]$-module via the natural projection map $\Z[t]\to
\Z[t_1,\ldots,t_n]$.

Consider the graded polynomial ring $\Z[c]:=\Z[c_1,c_2,\ldots]$ where
the element $c_p$ has degree $p$ for each $p\geq 1$.  Let
$I^{(k)}\subset \Z[c]$ be the homogeneous ideal generated by the
relations
\begin{equation}
\label{relations}
\frac{1-R_{12}}{1+R_{12}}\, c_{p,p} = 
c_pc_p + 2\sum_{i=1}^p(-1)^i c_{p+i}c_{p-i}= 0
\ \ \ \text{for} \  p > k,
\end{equation}
and define the quotient ring $C^{(k)}:=\Z[c]/{I^{(k)}}$.  We call
the graded polynomial ring $C^{(k)}[t]$ the {\em ring of double theta
polynomials}.

\begin{thm}
\label{mainthm} 
The polynomials $\Ti_\la(c\, |\, t)$, as $\la$ runs
over all $k$-strict partitions, form a free $\Z[t]$-basis of
$C^{(k)}[t]$. There is an isomorphism of graded $\Z[t]$-algebras
\[
\pi: C^{(k)}[t]\to \IH_T(\IG_k)
\]
such that $\Ti_\la(c\, |\, t)$ is mapped to
$\sigma_\la$, for every $k$-strict partition $\la$. For 
every $n\geq 1$, the morphism $\pi$ induces a 
surjective homomorphism of graded $\Z[t]$-algebras
$$\pi_n: C^{(k)}[t]\to \HH_{T_n}^*(\IG(n-k,2n))$$ which maps
$\Ti_\la(c\, |\, t)$ to $[X_\la]^{T_n}$, if $\la$ fits inside an
$(n-k)\times (n+k)$ rectangle, and to zero, otherwise. The equivariant 
cohomology ring $\HH_{T_n}^*(\IG(n-k,2n))$ is presented as a quotient of 
$C^{(k)}[t]$ modulo the relations 
\begin{equation}
\label{rel1}
\Ti_p(c\, |\, t)  
= 0, \ \ \ p\geq n+k+1
\end{equation}
and
\begin{equation}
\label{rel2}
\Ti_{(1^p)}(c\, |\, t) = 0, \ \ \ n-k+1 \leq p\leq n+k,
\end{equation}
where $(1^p)$ denotes the partition $(1,\ldots,1)$ with $p$ parts.
\end{thm}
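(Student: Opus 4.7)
I would establish the four assertions (a) the $\Z[t]$-basis property, (b) the stable isomorphism, (c) the finite-level surjection, and (d) the presentation, in sequence, with (d) being the main technical obstacle.

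\emph{Part (a).} The key observation is that $c^r_p - c_p = \sum_{j\ge 1} c_{p-j}\, h^r_j(-t) \in (t_1,t_2,\ldots)\cdot\Z[c][t]$, so
\[
\Ti_\la(c\,|\,t) \;\equiv\; \Ti_\la(c) \pmod{(t)}.
\]
By the main result of \cite{BKT2}, the $\{\Ti_\la(c)\}$ form a $\Z$-basis of $C^{(k)}$ and hence a $\Z[t]$-basis of $C^{(k)}[t]=C^{(k)}\otimes_\Z\Z[t]$. Expanding $\Ti_\la(c\,|\,t)=\sum_\mu a_{\la,\mu}(t)\,\Ti_\mu(c)$ in this basis, each $a_{\la,\mu}(t)$ has no constant term; by homogeneity (with $\deg c_p = p$ and $\deg t_i=1$) every nonzero term satisfies $|\mu|\le|\la|$. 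Ordering partitions by size makes the change-of-basis matrix unitriangular with $1$'s on the diagonal, hence invertible over $\Z[t]$, which gives (a).

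\emph{Parts (b) and (c).} Define $\pi:\Z[c][t]\to\IH_T(\IG_k)$ by sending $c_p$ to the stable equivariant special Schubert class $\sigma_{(p)}$ and the $t_i$ to the standard equivariant parameters. The equivariant Whitney sum relation for the tautological exact sequence on $\IG$, passed to the inverse limit, annihilates the generators (\ref{relations}) of $I^{(k)}$, so $\pi$ descends to $C^{(k)}[t]$. The essential input is the equivariant Giambelli identity $\pi(\Ti_\la(c\,|\,t)) = \sigma_\la$, which is established separately in the paper (via an equivariant Pieri rule or torus-fixed-point localization); I take this as given. Combined with (a) and the fact that $\{\sigma_\la\}$ is a known $\Z[t]$-basis of $\IH_T(\IG_k)$, the map $\pi$ matches bases bijectively and is therefore a graded $\Z[t]$-algebra isomorphism. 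The map $\pi_n$ is its composition with the surjection in (\ref{embeddings}), which sends $\sigma_\la$ to $[X_\la]^{T_n}$ if $\la$ fits in $(n-k)\times(n+k)$ and to $0$ otherwise; surjectivity of $\pi_n$ is then automatic.

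\emph{Part (d).} Let $J_n$ denote the ideal of $C^{(k)}[t]$ generated by (\ref{rel1}) and (\ref{rel2}). The containment $J_n\subseteq\ker(\pi_n)$ is transparent from the Schubert formula: the partition $(p)$ fails to fit in the rectangle when $p\ge n+k+1$, and $(1^p)$ fails to fit when $p\ge n-k+1$. For the reverse, part (a) and the description of $\pi_n$ show that $\ker(\pi_n)$ is the $\Z[t]$-span of $\{\Ti_\la(c\,|\,t): \la\not\subseteq(n-k)\times(n+k)\}$ together with the contribution of the higher equivariant parameters $t_j$, $j>n$, which act as zero on $\HH^*_{T_n}$. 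My strategy is to first reduce modulo $(t)$: the resulting statement is the non-equivariant presentation of $\HH^*(\IG(n-k,2n))$ from \cite{BKT2}, proved there by an inductive straightening of $R^\la c_\la$ through Pieri-type raising-operator identities that rewrite excess rows or columns in terms of the generators $\Ti_{(p)}(c)$ and $\Ti_{(1^p)}(c)$. I would then lift this straightening degree by degree in $t$, with the correction terms lying in $(t)\cdot C^{(k)}[t]$ absorbed into $J_n$ by induction on $t$-degree; the production of the higher parameters $t_j$ is tracked through substitutions of the form $c^{\be_p}_1 = c_1 - (t_1+\cdots+t_{n+k})$ occurring at the boundary of the rectangle. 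The combinatorial heart of the argument — writing out and verifying these Pieri-type straightening identities for $R^\la c^{\be(\la)}_\la$ when $\la$ lies just outside $(n-k)\times(n+k)$, and checking that the higher $t_j$'s are produced exactly as needed — is the main technical obstacle, although it closely parallels and refines the corresponding combinatorial arguments of \cite{BKT2}.
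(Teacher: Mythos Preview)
Your parts (a)--(c) are broadly in line with the paper, though you mischaracterize the proof of the key Giambelli identity $\pi(\Ti_\la(c\,|\,t))=\sigma_\la$: the paper does \emph{not} use an equivariant Pieri rule or localization. Instead it verifies the identity first for the class of a point $\la_0=(n+k,\ldots,2k+1)$ (where $\Ti_{\la_0}$ is a genuine Pfaffian and Kazarian's formula applies), and then descends to arbitrary $\la$ by showing that the divided differences $\partial_i$ act on the $\Ti_\la(c\,|\,t)$ exactly as the left divided differences $\delta_i$ act on equivariant Schubert classes (Proposition~\ref{uniq}). Your basis argument in (a) matches Proposition~\ref{basisthm}.

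The substantive divergence is in part (d). Your plan is a direct inductive straightening: rewrite each $\Ti_\la(c\,|\,t)$ with $\la$ outside the rectangle in terms of the generators of $J_n$, lifting the non-equivariant argument of \cite{BKT2} degree by degree in $t$. You rightly flag this as the main obstacle, and the combinatorics (including the production of $t_j$ for $j>n$) would be delicate. The paper sidesteps all of this with a one-line Nakayama-type argument. Setting $M=C^{(k)}[t]/J_n$ and $I=(t_1,t_2,\ldots)$, the non-equivariant presentation of \cite{BKT1} shows that the classes $\Ti_\la(c)$ for $\la\in\cP(k,n)$ generate $M/IM$ over $\Z$; then Mihalcea's graded Nakayama lemma \cite[Lemma~4.1]{Mi2} lifts this to say the $\Ti_\la(c\,|\,t)$ for $\la\in\cP(k,n)$ generate $M$ over $\Z[t]$. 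Injectivity of the surjection $\psi:M\to C^{(k)}[t]/I_n\cong H_n$ is then immediate from the $\Z[t]$-linear independence of the $\Ti_\la(c\,|\,t)$ in $C^{(k)}[t]$. This avoids any explicit straightening and handles the higher $t_j$ automatically. Your approach could likely be made to work, but it reinvents a wheel that the Nakayama lemma turns effortlessly.
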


Theorem \ref{mainthm} has a direct analogue for the torus-equivariant
cohomology ring of the odd orthogonal Grassmannian $\OG(n-k,2n+1)$;
this follows from known results (cf.\ \cite[\S 2.2 and \S 5.1]{T3}).
In this case, the polynomials $2^{-\ell_k(\la)}\Ti_\la(c\, |\, t)$
represent the equivariant Schubert classes, where $\ell_k(\la)$
denotes the number of parts $\la_i$ of $\la$ which are strictly bigger
than $k$. Theorem \ref{mainthm} therefore generalizes the results of
Pragacz \cite[\S 6]{P} and Ikeda and Naruse \cite{Ik, IN} to the
equivariant cohomology of all symplectic and odd orthogonal
Grassmannians.

Let
\begin{equation}
\label{ses}
0 \to E'\to E \to E''\to 0
\end{equation}
denote the universal exact sequence of vector bundles over
$\IG(n-k,2n)$, with $E$ the trivial bundle of rank $2n$ and $E'$ the
tautological subbundle of rank $n-k$. The $T_n$-equivariant vector
bundles $E'$, $E$, and $F_j$ have equivariant Chern classes, denoted
by $c_p^T(E')$, $c^T_p(E)$, and $c_p^T(F_j)$, respectively.  Define
$c^T_p(E-E'-F_j)$ by the equation of total Chern classes
$c^T(E-E'-F_j):=c^T(E)c^T(E')^{-1}c^T(F_j)^{-1}$. We now have the
following Chern class formula for the Schubert classes $[X_\la]^T$.

\begin{cor}
\label{maincor}
Let $\la$ be a $k$-strict partition that fits inside an
$(n-k)\times (n+k)$ rectangle. Then we have
\begin{equation}
\label{genKaz}
[X_\la]^T = \Ti_\la(E-E'-F_{n+\be(\la)}) = R^\la\, c^T_\la(E-E'-F_{n+\be(\la)})
\end{equation}
in the equivariant cohomology ring $\HH^*_{T_n}(\IG(n-k,2n))$. 
\end{cor}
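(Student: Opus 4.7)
The plan is to deduce the corollary directly from Theorem~\ref{mainthm}. Since $\pi_n$ sends $\Ti_\la(c\,|\,t)$ to $[X_\la]^{T_n}$, it suffices to show that $\pi_n(\Ti_\la(c\,|\,t))$ agrees with the Chern-class expression on the right-hand side of (\ref{genKaz}).

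Under the setup that accompanies $\pi_n$, the generators are identified as $\pi_n(c_p) = c_p^T(E-E'-F_n)$, and the variables $t_1,t_2,\ldots$ match the characters of $T_n$ so that $c^T(F_{n+r}/F_n)(z) = \prod_{i=1}^r(1+t_iz)$ for every $r \geq 0$. This identification is dictated by the Kempf--Laksov type formula for the codimension-one Schubert classes $[X_{(p)}]^{T_n}$ together with the compatibility of $\pi_n$ with the inclusions $W_n \hra W_{n+1}$ used to set up the stabilization (\ref{embeddings}).

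From the definition of $c^r_p$ one reads off the generating-series identity
\[
\sum_{p\geq 0}c^r_p\,z^p = \Big(\sum_{p\geq 0}c_p\,z^p\Big)\prod_{i=1}^r(1+t_iz)^{-1}.
\]
Applying $\pi_n$ and using $c^T(F_{n+r}) = c^T(F_n)\cdot c^T(F_{n+r}/F_n)$ yields
\[
\sum_{p\geq 0}\pi_n(c^r_p)\,z^p = \frac{c^T(E-E')(z)}{c^T(F_{n+r})(z)} = c^T(E-E'-F_{n+r})(z),
\]
so $\pi_n(c^r_p) = c^T_p(E-E'-F_{n+r})$ for every $p$ and every $r\geq 0$; the case $r<0$ is handled by the convention $h^r_j(-t)=e^{-r}_j(-t)$, which is compatible with inverting the preceding factorization. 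Because the noncommuting product structure is preserved by $\pi_n$, we conclude that $\pi_n(c^{\be(\la)}_\al) = c^T_\al(E-E'-F_{n+\be(\la)})$ for every integer sequence $\al$.

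Finally, a raising operator $R_{ij}$ acts on $c^{\be(\la)}_\al$ by shifting only the subscripts $\al_i,\al_j$ while leaving the superscript sequence $\be(\la)$ fixed, and the analogous action on the Chern-class monomial $c^T_\al(E-E'-F_{n+\be(\la)})$ shifts the subscripts in exactly the same way. Hence
\[
\pi_n(\Ti_\la(c\,|\,t)) = \pi_n\bigl(R^\la c^{\be(\la)}_\la\bigr) = R^\la\,c^T_\la(E-E'-F_{n+\be(\la)}) = \Ti_\la(E-E'-F_{n+\be(\la)}),
\]
and combining with Theorem~\ref{mainthm} gives (\ref{genKaz}). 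The substantive content is already in Theorem~\ref{mainthm}; the only point requiring care here is verifying the identification of $\pi_n$ on the generators, in particular matching the sign conventions so that $c^T(F_{n+r}/F_n)(z) = \prod_{i=1}^r(1+t_iz)$.
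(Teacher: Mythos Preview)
Your proof is correct and takes essentially the same approach as the paper: the paper derives Corollary~\ref{maincor} directly from equations (\ref{Kaz}) and (\ref{pineq}), which amount precisely to the identification $\pi_n(c^r_p)=c^T_p(E-E'-F_{n+r})$ together with Theorem~\ref{mainthm}. Your generating-series argument for this identification is exactly what underlies the paper's equation (\ref{geomeq}), and your caution about the sign convention is well placed---the paper resolves it via $\tm_i:=-c^T_1(F_{n+1-i}/F_{n-i})$ combined with the symplectic duality $F_{n+i}/F_{n+i-1}\cong(F_{n-i+1}/F_{n-i})^*$, which makes $\tm_1,\ldots,\tm_r$ the Chern roots of $F_{n+r}/F_n$.
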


The equivariant Chern polynomial in (\ref{genKaz}) is interpreted as the
image of $\Ti_\la(c)=R^\la\, c_\la$ under the $\Z$-linear map which
sends the noncommutative monomial $c_\al$ to $\prod_j
c^T_{\al_j}(E-E'-F_{n+\be_j(\la)})$, for every integer sequence
$\al$. Formula (\ref{genKaz}) is a natural generalization of
Kazarian's multi-Pfaffian formula \cite[Thm.\ 1.1]{Ka} to arbitrary
equivariant Schubert classes on symplectic Grassmannians.

The double theta polynomials $\Ti_\la(c\, |\, t)$ were defined in
Wilson's 2010 University of Maryland Ph.D. thesis \cite{W}, whose aim
was to apply the raising operator approach of \cite{BKT2} to the
theory of factorial Schur polynomials and equivariant Giambelli
formulas.\footnote{Wilson actually worked with the principal
specialization $\Ti_\la(x,z\, |\, t)$ of $\Ti_\la(c\, |\, t)$ in the
ring of type C double Schubert polynomials of \cite{IMN1}.} In her
dissertation, Wilson gave a direct proof that the $\Ti_\la(c\, |\, t)$
satisfy the {\em equivariant Chevalley rule} in $\IH_T(\IG_k)$, i.e.,
the combinatorial formula for the expansion of the products
$\sigma_1\cdot \sigma_\la$ in the basis of stable equivariant Schubert
classes. This fits into a program for proving Theorem \ref{mainthm},
and its extension to equivariant quantum cohomology, by applying
Mihalcea's characterization theorem \cite[Cor.\ 8.2]{Mi1} (see \S
\ref{trans}). The proof of the Chevalley rule for the $\Ti_\la(c\, |\,
t)$ which we present in \S \ref{prelim} and \S \ref{cfdtp} uses the
technical tools for working with raising operators constructed in
\cite{BKT2}, but the argument is much simpler, since it avoids the
general Substitution Rule employed in op.\ cit.

It was conjectured by Wilson in \cite{W} that the $\Ti_\la(c\, |\, t)$
represent the stable equivariant Schubert classes on $\IG$. Ikeda and
Matsumura \cite[Thm.\ 1.2]{IM} established this result, which is the
main ingredient behind the new presentation of the equivariant
cohomology ring $\HH^*_{T_n}(\IG(n-k,2n))$ displayed in Theorem
\ref{mainthm}. A key technical achievement in their proof was 
to show that the $\Ti_\la(c\, |\,t)$ are compatible with the action 
of the (left) divided difference operators on $C^{(k)}[t]$. In Section
\ref{gdds}, we explain how this fact may be combined with a formula
for the equivariant Schubert class of a point on $\IG(n-k,2n)$, to
obtain a proof of Wilson's conjecture; the argument found in \cite{IM}
is different, and uses localization in equivariant cohomology.

It is easy to see that the polynomial $\Ti_\la(c\, |\, t)$ may be
written formally as a sum of Schur Pfaffians (cf.\ Proposition
\ref{kstrictS}). This identity was used in \cite{IM} in the proof of
the compatibility of the $\Ti_\la(c\, |\,t)$ with divided differences,
allowing them to avoid the language of raising operators entirely. In
Section \ref{dds}, we eliminate this feature of their argument, and
work directly with the raising operator expressions $R^\la$. This
makes the proof more transparent, and leads naturally to a
companion theory of {\em double eta polynomials} $H_\la(c\, |\, t)$,
which represent the equivariant Schubert classes on even orthogonal
Grassmannians; see \cite{T4} for further details.

The role of the first author of this article has been mostly
expository, extending the point of view found in \cite{T2, T3} to the
present setting, and comparing the results with various earlier
formulas in the literature. We emphasize that the double theta
polynomials $\Ti_\la(c\, |\, t)$ defined here are new, and are not a
special case of the type C double Schubert polynomials of \cite{IMN1},
which represent the equivariant Schubert classes on complete
symplectic flag varieties. Recently, Hudson, Ikeda, Matsumura, and
Naruse \cite{HIMN} proved an analogue of our cohomological formula in
connective $K$-theory, and Anderson and Fulton \cite{AF} obtained
related Chern class formulas for more general degeneracy loci. The
latter work features a powerful geometric approach to the theory,
which is uniform for all four classical Lie types.

This paper is organized as follows. Section \ref{prelim} establishes
some preliminary lemmas which are required in our proof that the
$\Ti_\la(c\, |\, t)$ obey the equivariant Chevalley rule (Theorem
\ref{cvthm}); the latter is completed in Sections \ref{cfdtp} and
\ref{trans}.  Although Theorem \ref{cvthm} is a consequence of Theorem
\ref{mainthm}, it is a much earlier result, and the independent proof
given here exhibits alternating properties of the $\Ti_\la(c\, |\, t)$
which are useful in other contexts. Section \ref{firstexs} is
concerned with how the $\Ti_\la(c\, |\, t)$ behave in special cases,
and relates them to some earlier formulas. In Section \ref{dds}, we
observe that the action of the divided differences on $C^{(k)}[t]$
lifts to $\Z[c,t]$, and prove the key result about them and the
$\Ti_\la(c\, |\, t)$ (Proposition \ref{uniq}) using our raising
operator approach.  The proof of Theorem \ref{mainthm} and its
corollaries is contained in Section \ref{gdds}.

We thank Andrew Kresch and Leonardo Mihalcea for helpful discussions
related to this project, and Takeshi Ikeda and Tomoo Matsumura for
informative correspondence regarding their joint work.

\section{Preliminary results}
\label{prelim}

We begin with a purely formal lemma regarding the elements
$c_p^r$ of $\Z[c,t]$. For $r<0$ we let $t_r:=t_{-r}$.

\begin{lemma}
\label{ctlem}
Suppose that $p,r\in \Z$.

\medskip
\noin
{\em (a)} Assume that $r>0$. Then we have
\[
c_p^r = c_p^{r-1} - t_r\, c_{p-1}^r.
\]

\medskip
\noin
{\em (b)} Assume that $r \leq 0$. Then we have
\[
c_p^r = c_p^{r-1} + t_{r-1}\, c_{p-1}^r.
\]
\end{lemma}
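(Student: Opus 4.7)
The plan is to reduce both identities to the standard one-step recursions for elementary and complete symmetric polynomials, viewed through the generating series given in the introduction, and then substitute into the definition $c_p^r = \sum_{j=0}^p c_{p-j}\, h_j^r(-t)$ and reindex.

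For part (a), I would factor the generating series of the $h_j^r(-t)$ as
\[
\prod_{i=1}^{r}(1+t_iz)^{-1} = (1+t_rz)^{-1}\prod_{i=1}^{r-1}(1+t_iz)^{-1}.
\]
Clearing the denominator $(1+t_rz)$ and matching coefficients of $z^j$ produces the recursion $h_j^{r-1}(-t) = h_j^{r}(-t) + t_r\, h_{j-1}^{r}(-t)$, valid for $r>0$. Substituting this (solved for $h_j^{r}(-t)$) into the defining sum for $c_p^r$, splitting the sum, and shifting the index $j\mapsto j+1$ in the second piece immediately yields $c_p^r = c_p^{r-1} - t_r\, c_{p-1}^r$.

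For part (b), the key point is that when $r\le 0$ we have $h_j^r(-t)=e_j^{-r}(-t)$ by convention, and $h_j^{r-1}(-t)=e_j^{1-r}(-t)$. Now the relevant factorization is of an elementary series,
\[
\prod_{i=1}^{1-r}(1-t_iz) = (1-t_{1-r}z)\prod_{i=1}^{-r}(1-t_iz),
\]
which yields $e_j^{1-r}(-t) = e_j^{-r}(-t) - t_{1-r}\, e_{j-1}^{-r}(-t)$. Translating back, and using the sign convention $t_{r-1}=t_{-(r-1)}=t_{1-r}$ (since $r-1<0$), this is $h_j^{r-1}(-t) = h_j^r(-t) - t_{r-1}\, h_{j-1}^r(-t)$. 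Substituting into $c_p^r$ and reindexing as before gives $c_p^r = c_p^{r-1} + t_{r-1}\, c_{p-1}^r$.

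The only real subtlety, rather than an obstacle, is bookkeeping around the convention $t_r:=t_{-r}$ for $r<0$ and the switch between $h$ and $e$ at $r=0$; in particular one should verify the boundary case $r=0$ of (b) explicitly, where the identity reads $c_p = c_p^{-1} + t_1\, c_{p-1}$, matching the direct expansion $c_p^{-1} = c_p - t_1 c_{p-1}$ from $e_j^1(-t)$. Once those conventions are set, both statements are purely formal consequences of one-variable generating function manipulations.
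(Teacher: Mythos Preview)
Your proof is correct and follows essentially the same approach as the paper: both parts reduce to the one-step recursions $h_j^m(-t) = h_j^{m-1}(-t) - t_m\, h_{j-1}^m(-t)$ and $e_j^m(-t) = e_j^{m-1}(-t) - t_m\, e_{j-1}^{m-1}(-t)$, substituted into the defining sum for $c_p^r$. Your handling of part~(b) is in fact slightly cleaner than the paper's, since by keeping the summation limit at $p$ rather than truncating at $-r$ you avoid the paper's add-and-subtract of the boundary term $A=c_{p-m}(-t_1)\cdots(-t_m)$.
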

\begin{proof}
For any positive integer $m$, we have the basic equations
\[
e^m_j(-t) = e^{m-1}_j(-t)-t_m\,e_{j-1}^{m-1}(-t) \ \  \text{and}  \ \ 
h^m_j(-t) = h^{m-1}_j(-t)-t_m\,h_{j-1}^m(-t).
\]
If $r > 0$, it follows that
\[
c_p^r = \sum_{j=0}^p
c_{p-j}\, h_j^r(-t) = \sum_{j=0}^p c_{p-j}
(h_j^{r-1}(-t)-t_r h_{j-1}^r(-t))
\]
and the result of part (a) is proved. For (b), let $m:=|r|+1$ and compute
\begin{gather*} 
c_p^r = \sum_{j=0}^{m-1} c_{p-j}\, e_j^{m-1}(-t)
= \sum_{j=0}^{m-1} c_{p-j} 
(e_j^m(-t)+t_m\,e_{j-1}^{m-1}(-t)) \\
=\sum_{j=0}^{m-1} c_{p-j}\, e_j^m(-t) +
t_m\sum_{j=1}^{m-1} c_{p-j}\,e_{j-1}^{m-1}(-t) \\
=\sum_{j=0}^m c_{p-j}\, e_j^m(-t) +
t_m\sum_{s=0}^{m-1} c_{p-1-s}\,e_s^{m-1}(-t).
\end{gather*}
(In the last equality, notice that we have added the term 
$$A:=c_{p-m}\,(-t_1)\cdots (-t_m)$$ 
to the first sum, and added $-A$ to the second sum.) The result follows.
\end{proof}

Let $\Delta^{\circ} := \{(i,j) \in \N \times \N \mid 1\leq i<j \}$,
equipped with the partial order $\leq$ defined by $(i',j')\leq (i,j)$
if and only if $i'\leq i$ and $j'\leq j$.  A finite subset $D$ of
$\Delta^{\circ}$ is a {\em valid set of pairs} if it is an order ideal
in $\Delta^{\circ}$. An {\em outer corner} of a valid set of pairs $D$
is a pair $(i,j)\in \Delta^\circ\smallsetminus D$ such that $D\cup
\{(i,j)\}$ is also a valid set of pairs.

\begin{defn}
Given a valid set of pairs
$D$ and an integer sequence $\mu$, we 
define sequences $a=a(D)$ and $\gamma=\gamma(D,\mu)$ by
\[
a_j(D) := \#\{i<j\ |\ (i,j)\notin D\}  \ \ \text{ and } \ \  
\gamma_j(D,\mu) :=k+1-\mu_j+a_j(D)
\]
and the raising operator
\[
R^D := \prod_{i<j}(1-R_{ij})\prod_{i<j\, :\, (i,j)\in D}(1+R_{ij})^{-1}.
\]
Finally, let
$$T(D,\mu):=R^D\, c^{\gamma(D,\mu)}_{\mu}.$$ 
\end{defn}

Let $\epsilon_j$ denote the 
$j$-th standard basis vector in $\Z^\ell$.
The next result is the analogue of \cite[Lemma 1.4]{BKT2} that 
we require here.

\begin{lemma}
\label{tident}
Let $(i,j)$ be an outer corner of the valid set of pairs $D$, and
suppose that the integer sequence $\mu$ satisfies $\mu_i > k \geq
\mu_j$.  Let $\rho:=r\epsilon_j$ for some integer $r\leq 1$ and 
set $\gamma := \gamma(D,\mu + \rho)$.  Then we have
\begin{align*}
T(D,\mu+\rho) & = 
T(D\cup (i,j),\mu+ \rho)+T(D\cup (i,j), \mu+ R_{ij}\rho) \\
&\quad  + (t_{\gamma_i-1}-t_{\gamma_j})\,
R^{D\cup(i,j)}\, c^{\gamma}_{\mu+\rho-\epsilon_j}
\end{align*}
in $\Z[c,t]$. In particular, if $\mu_i+\mu_j+\rho_j=2k+1+a_j(D)$, then 
\begin{equation}
\label{cancel}
T(D,\mu+\rho) = T(D\cup (i,j),\mu+\rho)+
T(D\cup (i,j),\mu+ R_{ij}\rho).
\end{equation}
\end{lemma}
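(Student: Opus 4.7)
The plan is to exploit the factorization $R^D = R^{D\cup(i,j)}(1+R_{ij})$, valid because $(i,j)\notin D$, and then use Lemma \ref{ctlem} twice to adjust the superscripts. Applying this factorization and using that raising operators act only on subscripts (so $R_{ij}(\mu+\rho) = \mu+R_{ij}\rho$), I obtain the preliminary decomposition
\[
T(D,\mu+\rho) = R^{D\cup(i,j)}\, c^{\gamma}_{\mu+\rho} + R^{D\cup(i,j)}\, c^{\gamma}_{\mu+R_{ij}\rho}.
\]
Writing $\gamma' := \gamma(D\cup(i,j),\mu+\rho)$ and $\gamma'' := \gamma(D\cup(i,j),\mu+R_{ij}\rho)$, a direct computation using $a_j(D\cup(i,j)) = a_j(D) - 1$ and $a_\ell(D\cup(i,j)) = a_\ell(D)$ for $\ell\neq j$ shows that $\gamma'$ agrees with $\gamma$ except $\gamma'_j = \gamma_j - 1$, while $\gamma''$ agrees with $\gamma$ except $\gamma''_i = \gamma_i - 1$.

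The crucial observation is that the outer-corner hypothesis forces $a_i(D) = 0$: any pair $(i',i)$ with $i' < i$ lies below $(i,j)$ in $\Delta^{\circ}$ and hence belongs to the order ideal $D\cup\{(i,j)\}$, and being distinct from $(i,j)$ must lie in $D$. Together with $\mu_i > k$, this gives $\gamma_i = k+1-\mu_i \leq 0$, which is precisely the regime for Lemma \ref{ctlem}(b); applied to the $i$-th factor of $c^\gamma_{\mu+R_{ij}\rho}$ this yields
\[
c^{\gamma_i}_{\mu_i+1} - c^{\gamma_i-1}_{\mu_i+1} = t_{\gamma_i-1}\, c^{\gamma_i}_{\mu_i},
\]
while on the $j$-th factor of $c^\gamma_{\mu+\rho}$ (with $\gamma_j \geq 0$, generically $\gamma_j > 0$) Lemma \ref{ctlem}(a) gives
\[
c^{\gamma_j}_{(\mu+\rho)_j} - c^{\gamma_j-1}_{(\mu+\rho)_j} = -t_{\gamma_j}\, c^{\gamma_j}_{(\mu+\rho)_j-1}.
\]
A direct check of coordinates shows $\mu+R_{ij}\rho-\epsilon_i = \mu+\rho-\epsilon_j$, so both corrections land on the same monomial $c^\gamma_{\mu+\rho-\epsilon_j}$; summing and applying $R^{D\cup(i,j)}$ then produces the combined coefficient $t_{\gamma_i-1}-t_{\gamma_j}$, as claimed.

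For the special case (\ref{cancel}), substituting $\mu_i = k+1-\gamma_i$ (using $a_i(D)=0$) and $\mu_j+\rho_j = k+1-\gamma_j+a_j(D)$ into the hypothesis $\mu_i+\mu_j+\rho_j=2k+1+a_j(D)$ reduces it to the relation $\gamma_i+\gamma_j = 1$. Since $\gamma_i \leq 0$, this forces $\gamma_j \geq 1$, so Lemma \ref{ctlem}(a) does apply, and the convention $t_{-r} = t_r$ gives $t_{\gamma_i-1} = t_{-\gamma_j} = t_{\gamma_j}$, making the correction term vanish.

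The main obstacle is recognizing that the outer-corner hypothesis is exactly what forces $\gamma_i \leq 0$, which selects Lemma \ref{ctlem}(b) rather than (a) for the $i$-th factor; the resulting shift in the $t$-index is precisely what combines with the $j$-th correction to produce the asymmetric coefficient $t_{\gamma_i-1}-t_{\gamma_j}$ rather than, say, a symmetric expression $-t_{\gamma_i}-t_{\gamma_j}$. The subscript identity $\mu+R_{ij}\rho-\epsilon_i = \mu+\rho-\epsilon_j$ is the other crucial bookkeeping step, without which the two corrections would not merge into a single monomial.
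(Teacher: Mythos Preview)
Your proof is correct and follows essentially the same route as the paper: factor $R^D = R^{D\cup(i,j)}(1+R_{ij})$, then apply Lemma~\ref{ctlem}(a) to the $j$-th factor and Lemma~\ref{ctlem}(b) to the $i$-th factor, using the outer-corner hypothesis to guarantee $a_i(D)=0$ and hence $\gamma_i\leq 0$. Your hedge ``generically $\gamma_j>0$'' is unnecessary: since $(i,j)\notin D$ with $i<j$, we have $a_j(D)\geq 1$, and together with $\mu_j\leq k$ and $r\leq 1$ this gives $\gamma_j = k+1-\mu_j-r+a_j(D)\geq 1$ in all cases, so Lemma~\ref{ctlem}(a) always applies.
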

\begin{proof}
Observe that since $(i,j)$ is an outer corner of $D$, we have 
$$a_i(D)=a_i(D\cup(i,j))=0,$$ while $a_j(D\cup(i,j))=a_j(D)-1$. It
follows that 
$$\gamma(D\cup (i,j),\mu+\rho) = \gamma(D,\mu+\rho) - \epsilon_j.$$
For any integer $p$, Lemma \ref{ctlem}(a) gives
\[
c_p^{\gamma_j} = c_p^{\gamma_j-1} - t_{\gamma_j}
\, c_{p-1}^{\gamma_j}
\]
and hence
\begin{equation}
\label{qu}
R^{D\cup(i,j)}\, c^{\gamma}_{\mu+\rho} = T(D\cup(i,j),\mu+\rho)
-t_{\gamma_j}R^{D\cup(i,j)}\, c^{\gamma}_{\mu+\rho-\epsilon_j} .
\end{equation}
Notice that $\gamma_i=k+1-\mu_i\leq 0$. Therefore Lemma \ref{ctlem}(b) gives
\[
c_{p+1}^{\gamma_i} = c_{p+1}^{\gamma_i-1} + t_{\gamma_i-1}\, c_p^{\gamma_i}
\]
and we deduce that 
\begin{equation}
\label{CCequ}
R^{D\cup(i,j)}\, c^{\gamma}_{\mu+R_{ij}\rho} = T(D\cup(i,j),\mu+R_{ij}\rho)
+ t_{\gamma_i-1}\, R^{D\cup(i,j)}\, c^{\gamma}_{\mu+\rho-\epsilon_j} .
\end{equation}
Since we have
\[
T(D,\mu+\rho) = R^D\, c^{\gamma}_{\mu+\rho} =
R^{D\cup(i,j)}\, c^{\gamma}_{\mu+\rho} + R^{D\cup (i,j)} \, 
c^{\gamma}_{\mu+R_{ij}\rho},
\]
the first equality follows by combining (\ref{qu}) with (\ref{CCequ}).
To prove (\ref{cancel}), note that $|\gamma_i-1|=\mu_i-k$ and
$\gamma_j=k+1+a_j(D)-\mu_j-\rho_j$, therefore
$t_{\gamma_i-1}-t_{\gamma_j}=0$.
\end{proof}

\begin{lemma}\label{commuteA}
Let $\la=(\la_1,\ldots,\la_{j-1})$ and $\mu=(\mu_{j+2},\ldots,\mu_\ell)$
be integer vectors. Let $D$ be a valid set of
pairs such that $(j,j+1)\notin D$ and for each $h<j$, $(h,j)\in D$ if
and only if $(h,j+1)\in D$.  Then for any integers $r$ and $s$, we have
\[ 
T(D,(\la,r,s,\mu))=-T(D,(\la,s-1,r+1,\mu))
\]
in $\Z[c,t]$. In particular, if $\la_{j+1}=\la_j+1$, then $T(D,\la)=0$.
\end{lemma}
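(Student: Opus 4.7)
The plan is to show that $T(D, \nu) + T(D, \nu') = 0$ by pairing off terms in their expansions, where $\nu = (\la, r, s, \mu)$ and $\nu' = (\la, s-1, r+1, \mu)$. Set $\gamma = \gamma(D, \nu)$ and $\gamma' = \gamma(D, \nu')$. First, using $(j, j+1) \notin D$ together with the hypothesis $(h, j)\in D \iff (h, j+1)\in D$ for $h < j$, a direct count gives $a_{j+1}(D) = a_j(D) + 1$, and the formula for $\gamma$ then yields $\gamma'_j = \gamma_{j+1}$, $\gamma'_{j+1} = \gamma_j$, with $\gamma'_i = \gamma_i$ for all other $i$. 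Next, factor $R^D = A \cdot (1-R_{j, j+1}) \cdot C$, where $A$ collects the factors of $R^D$ whose indices avoid $\{j, j+1\}$ and $C$ collects the remaining factors other than $(1 - R_{j, j+1})$. The crucial observation is that $C$ is invariant under the involution $\tau$ that swaps $j$ and $j+1$ in every raising-operator index appearing in $C$. To verify this, note that the order-ideal property of $D$ combined with $(j, j+1) \notin D$ forces $(j, k), (j+1, k)\notin D$ for every $k > j+1$, so the factors indexed by such pairs are each $(1-R_{jk})(1-R_{j+1, k})$, plainly $\tau$-symmetric; for pairs $(h, j), (h, j+1)$ with $h < j$, the $\tau$-symmetry of the corresponding factors is exactly the hypothesis.

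Expanding $A = \sum a_{P_A} P_A$ and $C = \sum a_{P_C} P_C$ as formal sums of raising-operator monomials, the $\tau$-invariance of $C$ yields $a_{P_C} = a_{P_C^\tau}$. I would then pair each contribution to $T(D, \nu)$ indexed by $(P_A, P_C)$ with the contribution to $T(D, \nu')$ indexed by $(P_A, P_C^\tau)$. Since the raising operators in $C$ affecting positions $h < j$ are $R_{hj}$ and $R_{h, j+1}$ (both adding to position $h$), and those affecting positions $k > j+1$ are $R_{jk}$ and $R_{j+1, k}$ (both subtracting from position $k$), the net effects of $P_C$ and $P_C^\tau$ agree at every position $i \notin \{j, j+1\}$; and $P_A$ does not touch positions $j, j+1$. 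Hence the four noncommutative monomials in the paired contributions share a common factor $Y$ at positions outside $\{j, j+1\}$, and at positions $j, j+1$ they sum to
\[
Y \cdot \bigl( c^{\gamma_j}_\alpha c^{\gamma_{j+1}}_\beta - c^{\gamma_j}_{\alpha+1} c^{\gamma_{j+1}}_{\beta-1} + c^{\gamma_{j+1}}_{\beta-1} c^{\gamma_j}_{\alpha+1} - c^{\gamma_{j+1}}_\beta c^{\gamma_j}_\alpha \bigr),
\]
where $\alpha = r + \delta_j^C$, $\beta = s + \delta_{j+1}^C$, and $\delta_j^C, \delta_{j+1}^C$ are the net shifts of $P_C$ at positions $j, j+1$. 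This expression vanishes in $\Z[c, t]$ by commutativity, so summing over all pairs $(P_A, P_C)$ gives $T(D, \nu) + T(D, \nu') = 0$.

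The ``in particular'' assertion then follows by specializing to $r = \la_j$, $s = \la_j + 1$: both sequences reduce to $\la$, forcing $T(D, \la) = -T(D, \la)$, hence $T(D, \la) = 0$. The one mildly delicate step is the $\tau$-symmetry of $C$, which relies on the interplay of the hypothesis with the order-ideal property of $D$; once that is in place, the rest is a direct symbolic cancellation driven by the factor $(1 - R_{j, j+1})$ together with the commutativity of the variables $c_p^r$ in $\Z[c, t]$.
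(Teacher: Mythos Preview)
Your proof is correct and follows essentially the same approach as the paper's. The paper observes that $\gamma(D,\nu') = s_j(\gamma(D,\nu))$ and then defers to the argument of \cite[Lemma 1.2]{BKT2} (with details in \cite[Lemma 10]{W}); you have computed the same relation on the $\gamma$ sequences and then spelled out that deferred argument explicitly---factoring out $(1-R_{j,j+1})$, using the $\tau$-symmetry of the remaining factor $C$ (which relies on the order-ideal property to rule out pairs $(j,k),(j+1,k)\in D$ for $k>j+1$), and cancelling via commutativity in $\Z[c,t]$.
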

\begin{proof}
If $\al=(\al_1,\ldots,\al_\ell)$ is any integer sequence and $1\leq j
< \ell$, define $$s_j(\al):=(\al_1,\ldots, \al_{j-1},
\al_{j+1}, \al_j,\al_{j+2},\dots, \al_\ell).$$ 
The hypotheses on $D$ and $j$ imply that 
\[
\gamma(D, (\la,s-1,r+1,\mu))=
s_j(\gamma(D,(\la,r,s,\mu))).
\]
The argument is now the same as in the proof of \cite[Lemma 1.2]{BKT2};
the details are spelled out in \cite[Lemma 10]{W}.
\end{proof}

Throughout this paper, we will often write equalities that hold only 
in the rings $C^{(k)}$ and $C^{(k)}[t]$, where we have imposed the 
relations (\ref{relations}) on the generators $c_p$. Whenever these
relations are needed, we will emphasize this by noting that the 
equalities are true in $C^{(k)}[t]$ rather than in $\Z[c,t]$.

\begin{lemma}
\label{R12lem}
Given $p>k$ and $q>k$, we have
\[
\frac{1-R_{12}}{1+R_{12}}\,c_{(p,q)}^{(k+1-p,k+1-q)}=
-\frac{1-R_{12}}{1+R_{12}}\,c_{(q,p)}^{(k+1-q,k+1-p)}
\]
in $C^{(k)}[t]$.
\end{lemma}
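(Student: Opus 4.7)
The plan is to expand the raising operator $(1-R_{12})/(1+R_{12})$ as the formal series $1+2\sum_{m\geq 1}(-1)^m R_{12}^m$, add the two sides of the claimed identity, and reduce modulo the classical relations (\ref{relations}).

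First I would set $\Phi := \frac{1-R_{12}}{1+R_{12}}\bigl(c_{(p,q)}^{(k+1-p,k+1-q)} + c_{(q,p)}^{(k+1-q,k+1-p)}\bigr)$ and compute $\Phi$ in $\Z[c,t]$. Once the operator has been applied, the resulting $c$-monomials may be multiplied commutatively; pairing the $R_{12}^m$ contribution from the first summand with the $R_{12}^m$ contribution from the second (the latter, after renaming $m\mapsto -m$, corresponds to $R_{12}^{-m}$) yields the balanced expression
\[
\Phi \;=\; 2\sum_{m\in\Z}(-1)^m\, c_{p+m}^{\,k+1-p}\, c_{q-m}^{\,k+1-q}.
\]

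Next I would substitute the definition $c_{j}^{r} = \sum_{a\geq 0}c_{j-a}\,h_{a}^{r}(-t)$ and reindex by $n = p+m-a$ and $N = p+q-a-b$ to isolate, for each pair $(a,b)\in\N\times\N$, the inner sum $\sigma(N) := \sum_n (-1)^n c_n c_{N-n}$. For odd $N$ the involution $n\leftrightarrow N-n$ (legitimate in the commutative ring $\Z[c]$) kills $\sigma(N)$ outright; for even $N=2s$ a routine rearrangement identifies $\sigma(2s)$ with $\pm$ the generator of $I^{(k)}$ displayed in (\ref{relations}), so $\sigma(2s)$ vanishes in $C^{(k)}$ whenever $s>k$.

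The final ingredient is the truncation supplied by the hypothesis $p,q>k$: since $k+1-p\leq 0$, the convention $h_{a}^{k+1-p}(-t)=e_{a}^{\,p-k-1}(-t)$ forces this factor to vanish whenever $a\geq p-k$, and likewise for the $b$-variable. Therefore only pairs with $a+b\leq (p-k-1)+(q-k-1)$ contribute, so $N = p+q-a-b\geq 2k+2$; every even $N$ that survives satisfies $N/2\geq k+1>k$, and $\sigma(N)$ vanishes in $C^{(k)}$ in each surviving term. Hence $\Phi = 0$ in $C^{(k)}[t]$, which is the desired identity. The main obstacle I anticipate is careful bookkeeping of signs and of the interchange $m\leftrightarrow -m$ when combining the two summands; once this is handled, the truncation of the $h^{r}(-t)$ series for $r\leq 0$ ensures that only instances of (\ref{relations}) with $p>k$ are ever invoked, and the reduction is immediate.
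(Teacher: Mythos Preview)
Your argument is correct. Both your proof and the paper's rest on the same two ingredients: the truncation $h_a^{k+1-p}(-t)=e_a^{p-k-1}(-t)$, which forces $a\le p-k-1$ and $b\le q-k-1$, and the vanishing in $C^{(k)}$ of the quadratic expressions coming from (\ref{relations}) once the total $c$-degree exceeds $2k$. The difference is purely organizational. The paper first quotes from \cite{BKT2} the antisymmetry
\[
\frac{1-R_{12}}{1+R_{12}}\,c_{(r,s)}=-\frac{1-R_{12}}{1+R_{12}}\,c_{(s,r)}\qquad(r+s>2k),
\]
then factors the $t$-dependence out termwise and applies this identity to each summand $c_{(p-i,q-j)}$. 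You instead unroll everything at once, reindex to the closed sum $\sigma(N)=\sum_n(-1)^n c_n c_{N-n}$, and observe that $\sigma(N)$ vanishes for odd $N$ by the involution $n\leftrightarrow N-n$ and equals $(-1)^s$ times the generator (\ref{relations}) for $N=2s$. Your $\sigma$-computation is in effect an inline proof of the cited antisymmetry, so your version is more self-contained; the paper's is shorter because it borrows that step from the literature.
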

\begin{proof}
The relations (\ref{relations}) readily imply that 
\[
\frac{1-R_{12}}{1+R_{12}}\, c_{(r,s)} = -
\frac{1-R_{12}}{1+R_{12}}\, c_{(s,r)}
\]
whenever $r+s>2k$ (see \cite[Eqn.\ (9)]{BKT2}). 
For any raising operator $R$, we have
\[
R\,c_{(p,q)}^{(k+1-p,k+1-q)} = \sum_{i=0}^p\sum_{j=0}^q
R\,c_{(p-i,q-j)}\,e^{p-k-1}_i(-t)e^{q-k-1}_j(-t).
\]
We deduce that
\begin{align*}
\frac{1-R_{12}}{1+R_{12}}\,c_{(p,q)}^{(k+1-p,k+1-q)} &=
 \sum_{i=0}^p\sum_{j=0}^q
\frac{1-R_{12}}{1+R_{12}} \, c_{(p-i,q-j)}\,e^{p-k-1}_i(-t)e^{q-k-1}_j(-t) \\ 
&= - \sum_{j=0}^q\sum_{i=0}^p
\frac{1-R_{12}}{1+R_{12}} \,c_{(q-j,p-i)}\,e^{q-k-1}_j(-t)e^{p-k-1}_i(-t) \\
&= -\frac{1-R_{12}}{1+R_{12}}\,c_{(q,p)}^{(k+1-q,k+1-p)}.
\end{align*}
\end{proof}

Lemma \ref{R12lem} admits the following generalization.

\begin{lemma}\label{commuteC}
Let $\la=(\la_1,\ldots,\la_{j-1})$ and $\mu=(\mu_{j+2},\ldots,\mu_\ell)$
be integer vectors. Let $D$ be a valid set of pairs
such that $(j,j+1)\in D$ and for each $h>j+1$, $(j,h)\in D$ if and
only if $(j+1,h)\in D$. If $r>k$ and $s>k$, then we have
\[ 
T(D,(\la,r,s,\mu))=-T(D,(\la,s,r,\mu))
\]
in $C^{(k)}[t]$. In particular, if $\la_{j+1}=\la_j$, then $T(D,\la)=0$.
\end{lemma}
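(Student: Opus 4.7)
The plan is to adapt the proof of Lemma \ref{R12lem} to the more elaborate raising operator $R^D$. First I would exploit the hypotheses on $D$ to compute $\gamma=\gamma(D,(\la,r,s,\mu))$ and $\gamma'=\gamma(D,(\la,s,r,\mu))$ at the critical positions. Because $(j,j+1)\in D$ and $D$ is an order ideal in $\Delta^\circ$, every pair $(h,j')$ with $h\leq j$ and $j'\in\{j,j+1\}$ lies below $(j,j+1)$ in the partial order and hence belongs to $D$. Consequently $a_j(D)=a_{j+1}(D)=0$, yielding $\gamma_j=k+1-r$, $\gamma_{j+1}=k+1-s$, $\gamma'_j=k+1-s$, $\gamma'_{j+1}=k+1-r$, and $\gamma_i=\gamma'_i$ for $i\notin\{j,j+1\}$.

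Next I would factor $R^D=R_0\cdot\frac{1-R_{j,j+1}}{1+R_{j,j+1}}$, where $R_0$ collects the remaining factors, and use that $\frac{1-R_{j,j+1}}{1+R_{j,j+1}}$ only modifies subscripts at positions $j,j+1$ to write
\[
\frac{1-R_{j,j+1}}{1+R_{j,j+1}}\,c^\gamma_{(\la,r,s,\mu)}=\Bigl(\prod_{i\ne j,j+1}c^{\gamma_i}_{\al_i}\Bigr)\cdot\frac{1-R_{j,j+1}}{1+R_{j,j+1}}\,c^{(k+1-r,\,k+1-s)}_{(r,s)},
\]
with the analogous identity for $c^{\gamma'}_{(\la,s,r,\mu)}$ obtained by swapping $(r,s)$ to $(s,r)$. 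Lemma \ref{R12lem} then gives that the two bracketed position-$(j,j+1)$ pieces sum to an element of $I^{(k)}[t]$, so the sum of the two formal expressions itself lies in $I^{(k)}[t]$ (the common prefix is a polynomial in $\Z[c,t]$ times an ideal element).

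The main obstacle is that $R_0$ is a raising operator which does not preserve $I^{(k)}[t]$ in general, so I cannot directly apply $R_0$ to the above relation. To bypass this, I would mimic the proof of Lemma \ref{R12lem}: expand each $c^{\gamma_i}_{\al_i}=\sum_{\omega_i\ge 0}c_{\al_i-\omega_i}h^{\gamma_i}_{\omega_i}(-t)$, interchange with $R^D$, and pair the two resulting sums via the relabeling $\omega_j\leftrightarrow\omega_{j+1}$. Since $\gamma_i=\gamma'_i$ off positions $j,j+1$ and the coefficient $e^{r-k-1}_{\omega_j}(-t)\,e^{s-k-1}_{\omega_{j+1}}(-t)$ is invariant under that swap, the identity reduces to the single-theta statement
\[
R^D c_{(\tilde\la,p,q,\tilde\mu)}+R^D c_{(\tilde\la,q,p,\tilde\mu)}\in I^{(k)}
\]
for all $p,q>k$ and integer vectors $\tilde\la,\tilde\mu$, the bound $p,q>k$ being automatic because $e^{r-k-1}_{\omega_j}(-t)$ vanishes unless $\omega_j\le r-k-1$.

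To establish this single-theta identity I would follow the template of \cite[Lemma~1.2]{BKT2} --- the same template used for Lemma \ref{commuteA}. The hypothesis on $D$ arranges the factors of $R_0$ in matched pairs across positions $j,j+1$: for every $h<j$ both $(h,j)$ and $(h,j+1)$ lie in $D$ (by the order ideal property), contributing paired factors $\frac{1-R_{h,j}}{1+R_{h,j}}$ and $\frac{1-R_{h,j+1}}{1+R_{h,j+1}}$; for every $h>j+1$ the hypothesis matches the factors involving $R_{j,h}$ with those involving $R_{j+1,h}$. Combining this matched-pair structure with equation~(9) of \cite{BKT2} (the $t=0$ case of Lemma \ref{R12lem}) yields the required cancellation in $I^{(k)}$. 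The ``in particular'' clause follows by specializing $r=s$ in the main identity.
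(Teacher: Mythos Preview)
Your proposal is correct and follows essentially the same approach as the paper: both exploit the matched-pair symmetry of the factors in $R^D$ across positions $j$ and $j+1$ and then invoke the antisymmetry of the $(j,j+1)$ factor modulo $I^{(k)}$. The paper proceeds slightly more directly, adapting the proof of \cite[Lemma~1.3]{BKT2} with Lemma~\ref{R12lem} substituted for \cite[Eqn.~(9)]{BKT2}, whereas you first expand to reduce to the single-theta case and then run the matched-pair argument with \cite[Eqn.~(9)]{BKT2}; these are the same ingredients in a different order. One correction: the single-theta identity you reduce to is precisely \cite[Lemma~1.3]{BKT2}, not Lemma~1.2, so you may simply cite it rather than re-prove it.
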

\begin{proof}
The hypotheses on $D$ and $j$ imply that 
\[
\gamma(D,(\la,s,r,\mu))=
s_j(\gamma(D,(\la,r,s,\mu))).
\]
The rest of the proof follows
\cite[Lemma 1.3]{BKT2}, using Lemma \ref{R12lem} in place of 
\cite[Eqn.\ (9)]{BKT2}; see \cite[Lemma 11]{W} for the details.
\end{proof}

\section{The Chevalley formula for double theta polynomials}
\label{cfdtp}

We begin this section with a basis theorem for the $\Z[t]$-algebra 
$C^{(k)}[t]$.

\begin{prop}
\label{basisthm}
The monomials $c_\la$, the single theta polynomials $\Ti_\la(c)$, and 
the double theta polynomials $\Ti_\la(c\, |\, t)$ form three 
$\Z[t]$-bases of $C^{(k)}[t]$, as $\la$ runs over all 
$k$-strict partitions.
\end{prop}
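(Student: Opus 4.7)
The plan is to reduce the claim about the double theta polynomials to the known basis statements for the monomials $c_\la$ and the single theta polynomials $\Ti_\la(c)$ established in \cite{BKT2}. Granting that each of $\{c_\la\}_\la$ and $\{\Ti_\la(c)\}_\la$ is a $\Z$-basis of $C^{(k)}$ as $\la$ runs over $k$-strict partitions (where the defining relations (\ref{relations}) allow one to rewrite any non-$k$-strict $c_\la$ in terms of $k$-strict monomials, and linear independence follows from a graded-rank comparison), base change from $\Z$ to $\Z[t]$ immediately promotes both families to $\Z[t]$-bases of $C^{(k)}[t] = C^{(k)} \otimes_\Z \Z[t]$.

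For the double theta polynomials, I would equip $C^{(k)}[t]$ with the grading $\deg c_p = p$ and $\deg t_i = 1$; this descends from $\Z[c,t]$ since the ideal $I^{(k)}$ is homogeneous. The formula $c^r_p = \sum_{j=0}^p c_{p-j}\, h^r_j(-t)$ shows that each $c^r_p$ is homogeneous of degree $p$, so $\Ti_\la(c\, |\, t) = R^\la\, c^{\be(\la)}_\la$ is homogeneous of degree $|\la|$. Moreover, specialising at $t_i = 0$ sends $c^r_p \mapsto c_p$, so $\Ti_\la(c\, |\, t)|_{t=0} = \Ti_\la(c)$.

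Now expand $\Ti_\la(c\, |\, t) = \sum_\mu a_{\la,\mu}(t)\, \Ti_\mu(c)$ uniquely in the $\Z[t]$-basis $\{\Ti_\mu(c)\}_\mu$. Homogeneity forces $a_{\la,\mu}(t) \in \Z[t]$ to be homogeneous of degree $|\la| - |\mu|$; in particular $a_{\la,\mu} = 0$ whenever $|\mu| > |\la|$, and when $|\mu| = |\la|$ the coefficient is a constant integer. Specialising at $t = 0$ and using the linear independence of $\{\Ti_\mu(c)\}_\mu$ over $\Z$ then forces $a_{\la,\la} = 1$ and $a_{\la,\mu} = 0$ for $\mu \neq \la$ with $|\mu| = |\la|$. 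Ordering $k$-strict partitions by $|\la|$, the transition matrix $(a_{\la,\mu})$ is lower-triangular with $1$'s on the diagonal, hence invertible over $\Z[t]$, so $\{\Ti_\la(c\, |\, t)\}_\la$ is likewise a $\Z[t]$-basis of $C^{(k)}[t]$. The main obstacle is the basis property over $\Z$ supplied by \cite{BKT2}; granted that, the passage to the $t$-deformation is the clean homogeneity-and-specialisation argument sketched above.
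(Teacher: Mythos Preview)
Your argument is correct and follows the same overall plan as the paper: invoke \cite[Prop.\ 5.2]{BKT2} for the $\Z$-bases $\{c_\la\}$ and $\{\Ti_\la(c)\}$ of $C^{(k)}$, base-change to $\Z[t]$, and then prove a triangularity statement for the $\Ti_\la(c\,|\,t)$.

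The one genuine difference is in how the triangularity is obtained. The paper expands $\Ti_\la(c\,|\,t)$ in the \emph{monomial} basis $\{c_\mu\}$ and asserts that the terms with $|\mu|=|\la|$ are dominance-higher than $\la$, with leading term $c_\la$; this requires a (mild) combinatorial analysis of the raising operator expression $R^\la$. You instead expand in the $\{\Ti_\mu(c)\}$ basis and use only homogeneity together with the specialization $\Ti_\la(c\,|\,0)=\Ti_\la(c)$ to force the diagonal block (partitions of the same size) to be the identity. Your route is slightly cleaner, since it sidesteps the dominance-order claim entirely and needs nothing beyond the grading and the obvious $t=0$ specialization; the paper's route gives a little more, namely the refined dominance information about which $c_\mu$ can occur in top degree.
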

\begin{proof}
It follows from \cite[Prop.\ 5.2]{BKT2} that the monomials 
$c_\la$ and the single theta polynomials $\Ti_\la(c)$ 
for $\la$ a $k$-strict partition form two $\Z$-bases of 
$C^{(k)}$. We deduce that these two families are also $\Z[t]$-bases
of $C^{(k)}[t]$. By expanding the raising operator definition of 
$\Ti_\la(c\, |\, t)$, we obtain that
\[
\Ti_\la(c\, |\, t) = c_\la + \sum_{\mu} a_{\la\mu}\, c_\mu
\]
where $a_{\la\mu}\in \Z[t]$ and the sum is over $k$-strict partitions
$\mu$ with $\mu \succ \la$ in dominance order or $|\mu|<|\la|$.
Therefore, the $\Ti_\la(c\, |\,t)$ for $\la$ $k$-strict form another
$\Z[t]$-basis of $C^{(k)}[t]$.
\end{proof}

For the remainder of this section, $\Ti_\la$ will be used to denote
$\Ti_\la(c\, |\, t)$.  To state the Chevalley formula for the double
theta polynomials $\Ti_\la$, we need to recall certain definitions
from \cite[\S 1.2]{BKT1} for the Pieri products with the divisor
class.

We say that the box $[r,c]$ in row $r$ and column $c$ of a Young
diagram is {\em $k$-related} to the box $[r',c']$ if $|c-k-1|+r =
|c'-k-1|+r'$. The two grey boxes in the Young diagram of Figure
\ref{krelated} are $k$-related.
\begin{figure}
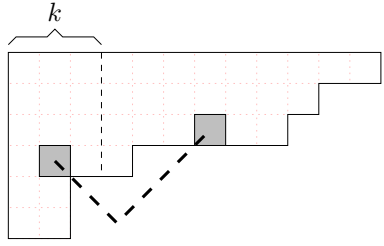

\centering
\pic{0.65}{shiftedrel} 
\caption{Two $k$-related boxes in a $k$-strict Young diagram.}
\label{krelated}
\end{figure}

For any two $k$-strict partitions $\lambda$ and $\mu$,
we write $\lambda \to \mu$ if $\mu$ may be obtained by (i) adding a
box to $\la$ or (ii) removing $r$ boxes from one of the first $k$
columns of $\la$ and adding $r+1$ boxes to a single row of the result,
so that the removed boxes and the bottom box of $\mu$ in that column
are each $k$-related to one of the added boxes. If $\la \to \mu$, let
$e_{\la\mu}=2$ if $\mu\supset \la$ and the added box in $\mu$ is not
in column $k+1$ and is not $k$-related to a bottom box in one of the
first $k$ columns of $\la$, and otherwise set $e_{\la\mu}=1$.

Fix a $k$-strict partition $\la$ and let $\ell=\ell(\la)$ denote the
{\em length} of $\la$, that is, the number of non-zero parts $\la_i$.
Recall that the {\em $k$-length} of $\la$ is 
$$\ell_k=\ell_k(\la):=\#\{\la_i\ |\ \la_i >k\}.$$

\begin{thm}[\cite{W}]
\label{cvthm} 
The equation
\begin{equation}
\label{cr}
\Ti_1\cdot\Ti_\la = \left(\sum_{j=k+1}^{k+\ell} t_j +
\sum_{j=1}^{\ell_k} t_{\la_j-k} -\sum_{j=\ell_k+1}^{\ell} t_{\beta_j(\la)}\right)
 \Ti_\la + \sum_{\la \to \mu} e_{\lambda\mu} \, \Ti_\mu
\end{equation}
holds in $C^{(k)}[t]$, where the sum is over all $k$-strict partitions
$\mu$ with $\la\to\mu$.
\end{thm}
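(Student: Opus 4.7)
My plan is to prove the Chevalley formula by induction on $\ell=\ell(\la)$, following the raising operator method of \cite[\S 3]{BKT2} for the single theta polynomials, but with careful bookkeeping of the $t$-contributions new to the equivariant setting. By Proposition \ref{basisthm}, the product $\Ti_1 \cdot \Ti_\la$ has a unique expansion as a $\Z[t]$-linear combination of the $\Ti_\mu$, so it suffices to produce such an expansion and match it with (\ref{cr}).

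First I would compute $\Ti_1 = c_1^k$, and then express $\Ti_1 \cdot \Ti_\la$ as a single raising operator expression $T(D_0, \nu)$, where $\nu = (1, \la_1, \ldots, \la_\ell)$ is the composition obtained by prepending a part of size~$1$ to $\la$, $D_0$ is the valid set of pairs obtained from $\cC(\la)$ by shifting all indices up by one, and the exponent sequence is $\gamma(D_0, \nu) = (k, \be_1(\la), \ldots, \be_\ell(\la))$. This places the problem within the framework of Section \ref{prelim}.

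Next I would apply Lemma \ref{tident} successively to the outer corners $(1, j)$ for $j = 2, 3, \ldots, \ell+1$, building up the extended raising operator one factor $(1-R_{1j})$ or $(1+R_{1j})^{-1}$ at a time. Each application decomposes the current expression into three pieces: two ``box-moving'' terms $T(D \cup (1,j), \,\cdot\,)$ corresponding to leaving the added box in row $1$ or shifting it to row $j$, together with a $t$-correction term weighted by $t_{\gamma_1 - 1} - t_{\gamma_j}$. At each stage, Lemmas \ref{commuteA} and \ref{commuteC} are invoked to discard those terms whose underlying composition has equal consecutive entries in rows where the pair structure of the valid set is symmetric; these vanishing statements mirror the ones used in the single-variable argument. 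The surviving box-moving terms assemble into $\sum_{\la \to \mu} e_{\la\mu} \Ti_\mu$, with multiplicities $e_{\la\mu}$ arising from the same dichotomy of Pieri moves as in \cite[\S 3]{BKT2}; here the cancellation identity (\ref{cancel}) of Lemma \ref{tident} is exactly what permits the two types of moves to combine with the correct signs.

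The main obstacle is the accounting of the $t$-correction terms. After all applications of Lemma \ref{tident} and (\ref{cancel}) have been combined, the collected corrections must be shown to equal
\[
\biggl(\sum_{j=k+1}^{k+\ell} t_j + \sum_{j=1}^{\ell_k} t_{\la_j-k} - \sum_{j=\ell_k+1}^\ell t_{\be_j(\la)}\biggr) \Ti_\la.
\]
The three sums should correspond to three regimes of the index $j$: the corrections attached to rows $j$ with $\la_{j-1} > k$ contribute $t_{\la_{j-1}-k}$ via the identification $|\gamma_{j-1}-1| = \la_{j-1}-k$ used in the proof of Lemma \ref{tident}; those with $\la_{j-1} \leq k$ contribute $-t_{\be_{j-1}(\la)}$; and the corrections coming from the ``diagonal'' factor $t_{\gamma_1-1}$ in each step assemble into $\sum_{j=k+1}^{k+\ell} t_j$. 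Verifying this telescoping in full generality—tracking how the exponent vector $\gamma$ evolves under each application of Lemma \ref{tident} and matching the two sources $t_{\gamma_1 - 1}$ and $-t_{\gamma_j}$ with the correct terms of (\ref{cr})—is the heart of the argument.
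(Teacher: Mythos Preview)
Your setup has two concrete errors that prevent the argument from getting off the ground. First, the identification $\Ti_1\cdot\Ti_\la = T(D_0,\nu)$ with $\nu=(1,\la_1,\ldots,\la_\ell)$ is false: the operator $R^{D_0}$ already contains the factors $\prod_{j\geq 2}(1-R_{1j})$, which act nontrivially on $c^{\gamma}_{\nu}$, whereas the naked product $\Ti_1\cdot\Ti_\la$ has no such cross-factors. (Your exponent sequence is also off by one: since $(1,j)\notin D_0$ for all $j$, one gets $a_j(D_0)=1+a_{j-1}(\cC(\la))$ and hence $\gamma_j(D_0,\nu)=\be_{j-1}(\la)+1$, not $\be_{j-1}(\la)$.) Second, and more seriously, Lemma~\ref{tident} cannot be invoked at the outer corners $(1,j)$ you propose: its hypothesis is $\mu_i>k\geq\mu_j$, and with $i=1$ you would need $\nu_1=1>k$, which fails for every $k\geq 1$; even when $k=0$ the condition $\nu_j\leq k$ fails since $\la_{j-1}\geq 1$. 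So the engine you plan to iterate simply does not run in the first row.

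The paper's route avoids both problems by appending the new box at position $\ell+1$ rather than prepending at position $1$. Concretely, one writes $c_1\cdot\Ti_\la = R^\la_{\ell+1}\,\prod_{i\leq\ell}(1-R_{i,\ell+1})^{-1}\,c^{\gamma'}_{\la^{\ell+1}}$ and observes that, because the last entry is $1$, the inverse expands to $1+\sum_i R_{i,\ell+1}$; this immediately yields the finite sum $\sum_{h=1}^{\ell+1} R^{\cC}\,c^{\gamma'}_{\la^h}$. The $t$-contributions are then extracted by a \emph{single} application of Lemma~\ref{ctlem}(a) or (b) at each index $h$ (not by Lemma~\ref{tident}), which directly produces the three sums $\sum t_{\la_j-k}$, $-\sum t_{\be_j(\la)}$, and $\sum_{j=1}^{k+\ell}t_j$. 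Only afterwards, in analyzing each residual term $T(\cC,\la^h)$, is Lemma~\ref{tident} invoked---and there it is applied at interior outer corners $(i,j)$ where the hypothesis $\mu_i>k\geq\mu_j$ is genuinely available (one has $i\leq\ell_k$ with $\la_i>k$, and $j>\ell_k$ with $\la_j\leq k$). Your proposed telescoping of $t$-corrections through repeated Lemma~\ref{tident} applications is therefore not how the equivariant bookkeeping actually works; the $t$-terms separate out cleanly at the very first step, before any raising-operator manipulations begin.
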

\begin{proof}
We begin by studying the partitions $\mu$ which appear on the 
right hand side of the product rule (\ref{cr}). It is an easy
exercise to show that $\cC(\mu)\supset \cC(\la)$ for all such $\mu$
(a more general result is proved in \cite[Lemma 2.1]{BKT2}). 

\begin{defn}
Suppose that $\mu$ is a $k$-strict partition such that $\la \to
\mu$. We say that $\mu$ and the Chevalley term $\Ti_\mu$ are {\em
of type I} if $\cC(\mu)=\cC(\la)$; otherwise, $\mu$ and $\Ti_\mu$ are
said to be {\em of type II}.
\end{defn} 

Suppose that $\la \to \mu$. It is then straightforward to check the
following assertions: First, if $\mu$ is of type I, then
$e_{\la\mu}=1$.  Second, if $\mu$ is of type II, then exactly one of
the following three possibilities holds: (a) $e_{\la\mu}=2$, in which
case $\mu\ssm\la$ is a single box in some row $h$, and
$\cC(\mu)\ssm\cC(\la)$ consists of a single pair in row $h$; (b)
$\mu\ssm\la$ is a single box in some row $h$ with $\mu_h\leq k$, and the
pairs in $\cC(\mu)\ssm\cC(\la)$ are all contained in column $h$; or
(c) $\mu\ssm\la$ has $r+1$ boxes in some row $h$ and $\cC(\mu)\ssm\cC(\la)$
consists of $r$ pairs in row $h$. We say that $\mu$ and the Chevalley
term $\Ti_\mu$ are of type I, II(a), II(b), or II(c), respectively.

Define $\cC:=\{(i,j)\in \cC(\la)\ |\ j\leq \ell\}$. We represent the
valid set of pairs $\cC$ as positions above the main diagonal of a 
matrix, denoted by dots in Figure \ref{bgh}. For $1 \leq h \leq
\ell_k+1$, we let $b_h := \min \{j > \ell_k \mid (h,j) \not\in \cC \}$
and $g_h := b_{h-1}$ (by convention, we set $g_1 = \ell+1$). Figure
\ref{bgh} illustrates these invariants.

\begin{figure}\centering
\includegraphics[scale=0.6]{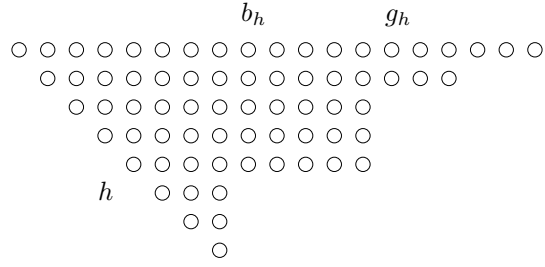}
\caption{A set of pairs $\cC$ with $h=6$, $b_h=10$, and $g_h=15$.}
\label{bgh}
\end{figure}

\begin{lemma}
\label{lambdabg}
  If $2\leq h\leq \ell_k+1$ then we have $\la_{h-1}-\la_h \geq g_h-b_h+1$.
\end{lemma}
\begin{proof}
The result is the same as \cite[Lemma 3.5]{BKT2}, but we repeat the
argument here for completeness. The inequality is clear if $b_h=g_h$,
as $\la$ is $k$-strict and $h\leq \ell_k+1$. If $b_h<g_h$, then since
$(h-1,g_h-1)\in \cC$ and $(h,b_h)\notin \cC$, we obtain
\[
\la_{h-1}-\la_h > (2k+g_h-h-\la_{g_h-1}) + (\la_{b_h}+h-b_h-2k)\geq g_h-b_h.
\]
\end{proof}

For any $h\geq 1$, let $\la^h:=\la+\epsilon_h$. Set $\gamma' :=
(\beta_1(\la),\ldots,\beta_\ell(\la),0)$ and
$$\gamma := \gamma(\cC,\la^{\ell+1})=
(\beta_1(\la),\ldots,\beta_\ell(\la),k+\ell).$$
For any $d\geq 1$ define the raising operator $R^{\la}_d$ by
\[
R_d^{\la} := \prod_{1\leq i<j\leq d}(1-R_{ij})\,
\prod_{i<j\, :\, (i,j)\in\cC} (1+R_{ij})^{-1}.
\]
We compute that 
\begin{gather*}
c_1\cdot \Ti_\la = c_1\cdot R^\la_\ell\, c^{\beta(\la)}_{\la} =
R_{\ell+1}^\la\cdot \prod_{i=1}^\ell(1-R_{i,\ell+1})^{-1} \,
c^{\gamma'}_{\la^{\ell+1}} \\ =
R^\la_{\ell+1}\cdot\prod_{i=1}^\ell(1+R_{i,\ell+1})\,c^{\gamma'}_{\la^{\ell+1}}
= R^\la_{\ell+1}\cdot (1+ \sum_{i=1}^\ell R_{i,\ell+1})
\,c^{\gamma'}_{\la^{\ell+1}}
\end{gather*}
and therefore
\begin{equation}\label{initequiv}
c_1\cdot\Ti_\la = \sum_{h=1}^{\ell+1} R^{\cC}\,
c^{\gamma'}_{\la^h}.
\end{equation}
We wish to replace $\gamma'$ by $\gamma$ in (\ref{initequiv}); note
that this substitution only affects the term $R^{\cC}\,
c^{\gamma'}_{\la^{\ell+1}}$.  
Since
$c^0_1 = c_1  = c^{k+\ell}_1 + \sum_{j=1}^{k+\ell} t_j$, we have
\[
R^{\cC}\, c^{\gamma'}_{\la^{\ell+1}} = R^{\cC}\,
c^{\gamma}_{\la^{\ell+1}} + \left(\sum_{j=1}^{k+\ell}
t_j\right) \Ti_\la
\]
and hence 
\begin{equation}\label{initequiv2}
c_1\cdot\Ti_\la  =  \left(\sum_{j=1}^{k+\ell} t_j\right)
\Ti_\la + \sum_{h=1}^{\ell+1} R^{\cC}\, c^{\gamma}_{\la^h}.
\end{equation}

We examine the terms $R^{\cC}\, c^{\gamma}_{\la^h}$ for $h\in
[1,\ell+1]$, which appear on the right hand side of equation
(\ref{initequiv2}). If $h\leq \ell_k$, then $\la_h>k$ and
$\gamma_h=k+1-\la_h\leq 0$. For any integer $p$, Lemma \ref{ctlem}(b)
gives
\[
c_{p+1}^{\gamma_h} = t_{\la_h-k} \, c_p^{k+1-\la_h} +
c_{p+1}^{k-\la_h}.
\]
It follows that
\[
R^{\cC}\, c^{\gamma}_{\la^h} = t_{\la_h-k}\,\Ti_\la +
T(\cC,\la^h).
\]
If $\ell_k < h \leq \ell$, then $\la_h\leq k$ and 
$\gamma_h=k+1+a_h-\la_h>0$. Lemma \ref{ctlem}(a) gives
\[
c_{p+1}^{\gamma_h} = - t_{\gamma_h} c_p^{\gamma_h} +
c_{p+1}^{\gamma_h-1}
\]
and hence
\[
R^{\cC}\, c^{\gamma}_{\la^h} = -t_{\gamma_h}\Ti_\la +
T(\cC,\la^h).
\]
Finally, we have $R^{\cC}\, c^{\gamma}_{\la^{\ell+1}} =
T(\cC,\la^{\ell+1})$.  We deduce that
\begin{equation}
\label{basic}
c_1\cdot\Ti_\la = \left(\sum_{j=1}^{k+\ell} t_j +
\sum_{j=1}^{\ell_k} t_{\la_j-k} -\sum_{j=\ell_k+1}^{\ell} t_{\beta_j(\la)}\right)
 \Ti_\la + \sum_{h=1}^{\ell+1} T(\cC,\la^h).
\end{equation}

We claim that
\begin{equation}
\label{subs}
\sum_{h=1}^{\ell+1} T(\cC,\la^h)
= \sum_{\la \to \mu} e_{\lambda\mu} \, \Ti_\mu.
\end{equation}
The proof of (\ref{subs}) proceeds by showing that for every $h\in
[1,\ell+1]$, the term $T(\cC,\la^h)$ is equal to (i) zero, or
(ii) a Chevalley term $\Ti_\mu$, or (iii) a sum $\Ti_\mu+\Ti_{\mu'}$
of two distinct Chevalley terms, exactly one of which is of type
II(a). The terms $T(\cC,\la^h)$ are decomposed exactly as in
the non-equivariant setting of \cite{BKT2}, using Lemmas \ref{tident},
\ref{commuteA}, and \ref{commuteC} as the main tools. However the
analysis here is much easier, so we provide a short self-contained
argument below.

Given any integer sequence $\mu$, define a weight condition $\W(i,j)$
on $\mu$ by
\[
\W(i,j)\ :\ \mu_i+\mu_j > 2k + j-i \,.
\]

\medskip
\noindent
{\bf Case 1.} Terms $T(\cC,\la^h)$ with $(h-1,h)\notin \cC$
(this condition always holds if $h>\ell_k+1$). Notice that we 
must have $\la_h \leq k$.

\medskip
\noin {\bf a)} Suppose that there is no outer corner $(r,h)$ of $\cC$
in column $h$ such that $\W(r,h)$ holds for $\la^h$. If
$\la_h=\la_{h-1}$, we have $T(\cC,\la^h)=0$ by Lemma \ref{commuteA}
with $j=h-1$ (note that $(r,h-1)\in \cC$ implies that $(r,h)\in\cC$,
so the conditions of the lemma hold). Otherwise,
$T(\cC,\la^h)=\Ti_{\la^h}$ is a Chevalley term of type I (in the
special case that $h=\ell_k+1$ we must have $\la_h < k$, so the result
is still true).

\medskip
\noin
{\bf b)} If there exists an outer corner $(r,h)$ of $\cC$ in column
$h$ such that $\W(r,h)$ holds for $\la^h$, then let $s<h$ be maximal
such that $\W(s,h)$ holds for $\la^h$. Observe that in this case, 
\[
\la_s +\la_{h-1} \geq \la_s+\la_h \geq 2k+h-s > 2k + (h-1)-s,
\]
and hence $(s,h-1)\in \cC$ or $s=h-1$. Notice also that we must have
$\la_{i-1}=\la_i+1$ for $i=r+1,\ldots,s$. 

By definition, we have $\gamma_h=k+1+a_h-\la_h$ and $\gamma_r=k+1-\la_r$.
We claim that $\la_r+\la_h=2k+a_h$. Indeed,
we know that $(r,h)\notin\cC$ and $(\la_h+1)+\la_r > 2k+h-r$, therefore
$\la_r+\la_h=2k+h-r$. The claim now follows, since $a_h = h-r$.

Let 
\[
E_j:=\{(r,h),\ldots,(j,h)\}, \ \ \ \text{for } j \in [r,s].
\]
Using Lemma \ref{tident} and the claim gives
\[
T(\cC,\la^h) = T(\cC\cup E_r,\la^h)+T(\cC\cup E_r,\la^r).
\]
Continuing in the same manner, we obtain
\[
T(\cC,\la^h) = T(\cC\cup E_s, \la^h) + 
\sum_{j=r}^s T(\cC\cup E_j,\la^j).
\]
Since $\la_{i-1}=\la_i+1$ for $i=r+1,\ldots,s$, Lemma \ref{commuteC}
implies that all terms in the sum over $j$ vanish, except for the
initial one. Therefore, we have
\[
T(\cC,\la^h) = T(\cC\cup E_r,\la^r) + T(\cC\cup E_s, \la^h).
\]
If $\la_h=\la_{h-1}$, then Lemma \ref{commuteA} implies that
$T(\cC\cup E_s, \la^h) = 0$. This must be the case since $\la_h+1 >
\la_{h-1}$ implies that $(s+1,h)$ is not an outer corner of $\cC\cup
E_s$. If $h=\ell_k+1$ and $\la_h=k$, then $\la_{h-1}=k+1$, $s=h-1$,
and $T(\cC\cup E_s,\la^h)= T(\cC\cup E_{h-1},\la^h)=0$, by
Lemma \ref{commuteC}.  The terms that remain after these cancellations
are Chevalley terms; note that $T(\cC\cup E_r,\la^r)=\Ti_{\la^r}$ is
of type II(a), and $T(\cC\cup E_s, \la^h)=\Ti_{\la^h}$ is of
type II(b).

\medskip
\noindent
{\bf Case 2.} Terms $T(\cC,\la^h)$ with $(h-1,h)\in \cC$ (and
hence $h\leq \ell_k+1$). Notice that $a_h=0$ for all such $h$, so
$\gamma_h=k+1-\la_h$.

\medskip
\noin {\bf a)} Suppose that there is no outer corner $(h,c)$ of $\cC$
such that $\W(h,c)$ holds for $\la^h$.  If $\la_h>k$ and
$\la_{h-1}=\la_h+1$, then we have $T(\cC,\la^h)=0$ by Lemma
\ref{commuteC}. In all other cases, $T(\cC,\la^h)=\Ti_{\la^h}$ is a
Chevalley term of type I.

\medskip
\noin
{\bf b)} Suppose that there exists an outer corner $(h,c)$ of $\cC$
such that $\W(h,c)$ holds for $\la^h$.  It follows that
$b_h=c$ and $\la_h+\la_c = 2k+c-h$.  Setting
$g=g_h$, we have $(h-1,g)\notin\cC$, and hence
$\la_{h-1}+\la_g \leq 2k+g-h+1$.
Lemma \ref{lambdabg} implies that 
\begin{gather*}
g-c+1 \leq \la_{h-1}-\la_h = \la_{h-1} + \la_c -2k-c+h \\
\leq (2k+g-h+1-\la_g) + \la_c -2k-c+h = g-c+1+(\la_c-\la_g). 
\end{gather*}
We deduce that if $\la_c=\la_g$, then $\la_{h-1}-\la_h=g-c+1$.

Note that we have $\la_h>k$. Let
$d\in [c,g)$ be maximal such that $\la_c=\la_d$. Define
\[
E'_j:=\{(h,c),\ldots,(h,j)\} \ \ \text{and} \ \ R_j := R_{hc}\cdots R_{hj}.
\]
Applying Lemma \ref{tident} repeatedly, and using
$\la_h+\la_c = 2k+c-h$, we obtain
\begin{equation}
\label{last}
T(\cC,\la^h) = T(\cC\cup E'_c, \la^h) + T(\cC\cup E'_d, R_d\la^h) + 
\sum_{j=c+1}^d T(\cC\cup E'_j,R_{j-1}\la^h).
\end{equation}

Lemma \ref{commuteA} implies that all the terms in the sum in (\ref{last}) 
are zero. Therefore
\[
T(\cC,\la^h) = T(\cC\cup E'_c, \la^h) + T(\cC\cup E'_d, R_d\la^h).
\]
If $\la_c>\la_g$, then we obtain two Chevalley terms; the first
summand $\Ti_{\la^h}$ is of type II(a), and the second
$\Ti_{R_d\la^h}$ is of type II(c), with $r=d-c+1$.  If $\la_c=\la_g$,
then since $\la_{h-1}-\la_h=g-c+1$, we conclude from Lemma
\ref{commuteC} that the last term $T(\cC\cup E'_d, R_d\la^h)$
vanishes.

\medskip
\noin The claim (\ref{subs}) now follows easily, by combining the
above two cases.  Observe in particular that each of the Chevalley
terms $\Ti_\mu$ of type II(a) appears twice in the decomposition of
the left hand side of (\ref{subs}): once from a term $T(\cC,\la^h)$
with $\mu\neq \la^h$, in Case 1(b), and once from a term
$T(\cC,\la^h)$ with $\mu=\la^h$, in Case 2(b). Finally, equations
(\ref{basic}) and (\ref{subs}) imply (\ref{cr}), since
$\Ti_1=c_1^k=c_1-(t_1+\cdots +t_k)$.
\end{proof}

\begin{example}
Let $k=2$ and consider the $2$-strict partition $\la=(7,4,3,2,1,1)$.
The element of the Weyl group $W_8$ associated to $\la$ is
$w_\la=(4,8,\ov{5},\ov{2},\ov{1},3, 6, 7)$ ($w_\la$ is defined in \S
\ref{trans}). We have $\cC=\{(1,2), (1,3), (1,4), (2,3)\}$,
$\gamma=\gamma(\cC,\la^7)=(-4,-1,0,3,6,7,8)$, $\ell_k=3$, and
$\ell=6$.  The Chevalley formula in this case is
\begin{align*}
\Ti_1\cdot \Ti_\la &= (t_1+t_2+t_4+2t_5+t_8)\,\Ti_\la +
\Ti_{(7,4,3,2,1,1,1)}+\Ti_{(7,4,3,2,2,1)} \\
&\quad + 2\,\Ti_{(7,5,3,2,1,1)}+ \Ti_{(7,6,3,1,1,1)}
+2\,\Ti_{(8,4,3,2,1,1)} + \Ti_{(10,4,3,2)}. 
\end{align*}
In the above rule, the partition $(7,4,3,2,1,1,1)$ is of type I, 
$(7,5,3,2,1,1)$ and $(8,4,3,2,1,1)$ are of type II(a), 
$(7,4,3,2,2,1)$ is of type II(b), and $(7,6,3,1,1,1)$ and 
$(10,4,3,2)$ are of type II(c). The terms $T(\cC,\la^h)$ in 
the sum (\ref{subs}) are expanded as follows:
\begin{gather*}
T(\cC,\la^1)=T(\cC\cup E'_5,\la^1)+ T(\cC\cup
  E'_6,R_6\la^1)=\Ti_{(8,4,3,2,1,1)}+\Ti_{(10,4,3,2)}\,;
\\ T(\cC,\la^2)=T(\cC\cup E'_4,\la^2)+ T(\cC\cup
  E'_4,R_4\la^2)=\Ti_{(7,5,3,2,1,1)}+\Ti_{(7,6,3,1,1,1)}\,;
\\ T(\cC,\la^3)=T(\cC,\la^6)=0\,; \ \ \quad
   T(\cC,\la^7)=\Ti_{(7,4,3,2,1,1,1)}\,;
   \\ T(\cC,\la^4)= T(\cC\cup E_2,\la^2) =
   \Ti_{(7,5,3,2,1,1)}\,; \\ T(\cC,\la^5)=T(\cC\cup
     E_1,\la^5)+ T(\cC\cup E_1,\la^1)=
   \Ti_{(7,4,3,2,2,1)}+\Ti_{(8,4,3,2,1,1)}\,.
\end{gather*}
\end{example}

\section{Transition to the hyperoctahedral group}
\label{trans}

The Weyl group for the root system of type $\text{C}_n$ is the {\em
  hyperoctahedral group} $W_n$ of signed permutations on the set
$\{1,\ldots,n\}$. We adopt the notation where a bar over an integer
denotes a negative sign.  The group $W_n$ is generated by the simple
transpositions $s_i=(i,i+1)$ for $1\leq i \leq n-1$ and the sign
change $s_0(1)=\ov{1}$.  The natural inclusion $W_n\hookrightarrow
W_{n+1}$ is defined by adding the fixed point $n+1$, and we set
$W_\infty=\cup_n W_n$. The length of an element $w$ in $W_\infty$ is
denoted by $\ell(w)$.

An element $w\in W_\infty$ is called {\em $k$-Grassmannian} if we have
$\ell(ws_i)=\ell(w)+1$ for all $i\neq k$; the set of all
$k$-Grassmannian elements is denoted by $W^{(k)}$. The elements of
$W^{(k)}$ are the minimal length coset representatives in
$W_\infty/W_{(k)}$, where $W_{(k)}$ is the subgroup of $W_\infty$
generated by the $s_i$ for $i\neq k$.  There is a canonical bijection
between $k$-strict partitions and $W^{(k)}$ (see \cite[\S
  6]{BKT2}). The element of $W^{(k)}$ corresponding to $\la$ is
denoted by $w_\la$, and lies in $W_n$ if and only if $\la$ fits inside
an $(n-k)\times(n+k)$ rectangle; the set of all such $k$-strict
partitions is denoted by $\cP(k,n)$.  Suppose that $w_\la =
(w_1,\ldots,w_n)\in W_n$, so that $0<w_1<\cdots < w_k$ and
$w_{k+1}<\cdots < w_n$. Then we deduce from \cite[\S 4.1 and
\S  4.4]{BKT1} that the bijection is given by the equations
\begin{equation}
\label{equtn}
\la_i=\begin{cases} 
k+|w_{k+i}| & \text{if $w_{k+i}<0$}, \\
\#\{p\leq k\, :\, w_p> w_{k+i}\} & \text{if $w_{k+i}>0$}.
\end{cases}
\end{equation}
Moreover, we have
\begin{equation}
\label{Cweq}
\cC(\la) = \{ (i,j)\in \Delta^\circ \ |\ w_{k+i}+ w_{k+j} < 0 \}
\end{equation}
and
\begin{equation}
\label{css}
\beta_j(\la)=\begin{cases}
w_{k+j}+1 & \text{if $w_{k+j}<0$}, \\
w_{k+j} & \text{if $w_{k+j}>0$}.
\end{cases}
\end{equation}

We are now ready to justify that the formula displayed in Theorem
\ref{cvthm} coincides with the equivariant Chevalley formula for the
products $\sigma_1\cdot \sigma_\la$ in the stable ring
$\IH_T(\IG_k)$. It follows from \cite[Thm.\ 1.1]{BKT1} and \cite[Lemma
  6.8]{IMN1} that
\[
\sigma_1\cdot \sigma_\la = (\sigma_1\vert_{w_\la})\cdot \sigma_\la 
+ \sum_{\la \to \mu} e_{\lambda\mu} \, \sigma_\mu,
\]
where $\sigma_1\vert_{w_\la}\in \Z[t]$ denotes the localization of the
stable equivariant Schubert class $\sigma_1$ at the torus-fixed point
indexed by $w_\la$. The polynomial $\sigma_1\vert_{w_\la}$ may be
computed e.g.\ as the image of the type C double Schubert polynomial
representing $\sigma_1$ under the homomorphism $\Phi_{w_\la}$ of
\cite[\S 6.1]{IMN1}; this gives
\begin{equation}
\label{inter}
\sigma_1\vert_{w_\la} = 2\sum_{j=1}^{\ell_k(\la)}t_{\la_j-k} + \sum_{j=1}^k(t_{w_j}-t_j).
\end{equation}
The fact that the term (\ref{inter}) coincides with the $t$-linear
coefficient in equation (\ref{cr}) amounts to the identity
\begin{equation}
\label{agree}
\sum_{j=1}^kt_{w_j} + \sum_{j=1}^{\ell_k}t_{\la_j-k} + 
\sum_{j=\ell_k+1}^{\ell} t_{\beta_j(\la)} = \sum_{j=1}^{k+\ell}t_j.
\end{equation}
The equations (\ref{css}) imply that 
\[
 \sum_{j=1}^{\ell_k}t_{\la_j-k} + 
\sum_{j=\ell_k+1}^{\ell} t_{\beta_j(\la)} =
\sum_{j=1}^{\ell_k}t_{|w_{k+j}|} +
\sum_{j=\ell_k+1}^{\ell} t_{w_{k+j}}. 
\]
Since $w$ is a signed {\em permutation}, the equality (\ref{agree}) follows.

\medskip

A theorem of Mihalcea \cite[Cor.\ 8.2]{Mi1} proves that the
equivariant Chevalley rule characterizes the $\Z[t]$-algebra
$\HH^*_{T_n}(\IG(n-k,2n))$, when it can be verified for a given
$\Z[t]$-basis of the latter ring. However, an analogous
characterization is not available for the stable equivariant
cohomology ring $\IH_T(\IG_k)$. To complete the proof that the
polynomials $\Ti_\la(c\, |\, t)$ represent equivariant Schubert
classes, Theorem \ref{cvthm} could be combined with a {\em vanishing
  theorem} that determines their behavior under the natural projection
$\pi_n$ of \S \ref{geomap} to the finite ring
$\HH^*_{T_n}(\IG(n-k,2n))$. The vanishing is easy to show using Lemma
\ref{ctlem} in the Lagrangian case when $k=0$, and similarly for
maximal orthogonal and type A Grassmannians. This method gives new,
intrinsic proofs that double Schur $S$-, $Q$-, and $P$-polynomials
represent equivariant Schubert classes, which apply to equivariant
{\em quantum} cohomology as well, along the lines of \cite{Mi2,
  IMN2}. It would be desirable to extend these arguments to arbitrary
isotropic Grassmannians; for the non-equivariant case, see \cite{BKT3,
  BKT4}.

\section{Examples and relations with other polynomials}
\label{firstexs}

In this section, we will prove some general facts about the 
double theta polynomials $\Ti_\la(c \, |\, t)$ and study how
they specialize and relate to earlier formulas.

\subsection{}
Let $\al=\{\al_j\}_{1\leq j\leq \ell}$ and $\rho=\{\rho_j\}_{1\leq
j\leq \ell}$ be two integer sequences. Consider the following Schur
type determinant:
\[
S_{\al}^\rho(c\, |\, t):= \prod_{i<j}(1-R_{ij})\, c_\al^\rho=
 \det \left(c^{\rho_i}_{\al_i+j-i}\right)_{1\leq i,j \leq \ell}
\]
which specializes when $t=0$ to the {\em Schur $S$-polynomial}
$$S_\al(c):= \det(c_{\al_i+j-i})_{1\leq i,j \leq \ell}.$$
Furthermore, define the Schur type Pfaffian:
\[
Q_{\al}^\rho(c\, |\, t):= \prod_{i<j}\frac{1-R_{ij}}{1+R_{ij}}
\, c_\al^\rho = 
\Pf\left(\frac{1-R_{12}}{1+R_{12}}\,
c^{\rho_i,\rho_j}_{\al_i,\al_j}\right)_{i<j}.
\]
(The matrix in the Pfaffian has size $r\times r$, where $r$ is the
least even integer such that $r\geq \ell$.) This specializes when
$t=0$ to the {\em Schur $Q$-polynomial}
\[
Q_\al(c) := \Pf\left(\frac{1-R_{12}}{1+R_{12}}
c_{\al_i,\al_j}\right)_{i<j},
\]
where the entries in the Pfaffian satisfy
\[
\frac{1-R_{12}}{1+R_{12}}\,
c_{\al_i,\al_j} = c_{\al_i}c_{\al_j}-2c_{\al_i+1}c_{\al_j-1}+2c_{\al_i+2}c_{\al_j-2}-\cdots
\]

Let $\la=(\la_1,\ldots,\la_\ell)$ be 
a $k$-strict partition of length $\ell$, and define 
\[
\cC_\ell(\la):=\{(i,j)\in \Delta^\circ \, |\,  j \leq \ell 
\ \ \text{and} \ \ \la_i+\la_j > 2k+j-i\}.
\]
and
\[
\cA_\ell(\la):=\{(i,j)\in \Delta^\circ \, |\, j \leq \ell 
\ \ \text{and} \ \ \la_i+\la_j \leq 2k+j-i\}.
\]
Recall that a {\em multiset} is a set in which members are allowed to
appear more than once, with finite multiplicity.  The next result
generalizes formulas found in \cite[Prop.\ 5.9]{BKT2} and \cite[Prop.\
2]{BKT3} to the case of double theta polynomials.

\begin{prop}
\label{kstrictS}
For any $k$-strict partition $\la$ of length $\ell$, we have
\[
\Theta_{\la}(c\, |\, t) = \prod_{(i,j)\in \cC_\ell(\la)}\,
(1-R_{ij}+R_{ij}^2-\cdots)\,S^{\be(\la)}_{\la}(c\, |\, t)
\]
and 
\[
\Theta_{\la}(c\, |\, t) = \prod_{(i,j)\in \cA_\ell(\la)}\,
(1+R_{ij})\,Q^{\be(\la)}_{\la}(c\, |\, t) = 
\sum_{\nu\in \cN} Q^{\be(\la)}_{\nu}(c\, |\, t)
\]
in $\Z[c,t]$, where $\cN$ is the multiset of integer vectors defined by
  \[
  \cN := \left\{\prod_{(i,j)\in S}R_{ij}\,\la \mid S\subset
    \cA_\ell(\la)\right\}.
  \]
\end{prop}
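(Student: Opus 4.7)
My plan is to derive both identities by purely formal manipulation of commuting raising-operator expressions, after a preliminary truncation step. Since $\la$ has length $\ell$, I claim that when $R^\la$ acts on $c^{\be(\la)}_\la$, only pairs $(i,j)$ with $j\leq\ell$ actually contribute: expanding $R^\la$ into a signed sum of monomial operators $R^T=\prod R_{ij}^{m_{ij}}$, if $T$ involves any pair with $j>\ell$, then the largest index $M$ appearing in $T$ satisfies $M>\ell$ and has no outgoing pairs in $T$ but at least one incoming one, so the net change at $M$ from $R^T$ is strictly negative and the factor $c^{\be_M}_{(R^T\la)_M}$ vanishes because $(R^T\la)_M<0$. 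The same net-flow argument lets us replace the infinite products in $S^{\be(\la)}_\la(c\,|\,t)$ and $Q^{\be(\la)}_\la(c\,|\,t)$ by their restrictions to $\Delta^\circ_\ell:=\{(i,j):1\leq i<j\leq\ell\}$, and $\cC(\la)$ by $\cC_\ell(\la)$ inside $R^\la$.

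With this truncation in hand, the Schur identity is immediate. Commuting the factors so that the alternating product acts first gives
\[
\Theta_\la(c\,|\,t) \,=\, \prod_{(i,j)\in\cC_\ell(\la)}(1+R_{ij})^{-1}\,\prod_{1\leq i<j\leq\ell}(1-R_{ij})\,c^{\be(\la)}_\la \,=\, \prod_{(i,j)\in\cC_\ell(\la)}(1+R_{ij})^{-1}\,S^{\be(\la)}_\la(c\,|\,t),
\]
and expanding each factor as $(1+R_{ij})^{-1}=1-R_{ij}+R_{ij}^2-\cdots$ yields the first formula.

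For the Pfaffian identity I would use the disjoint decomposition $\Delta^\circ_\ell=\cA_\ell(\la)\sqcup\cC_\ell(\la)$ to split the outer product and obtain
\[
R^\la \,=\, \prod_{(i,j)\in\cA_\ell(\la)}(1-R_{ij})\,\prod_{(i,j)\in\cC_\ell(\la)}\frac{1-R_{ij}}{1+R_{ij}}.
\]
Multiplying and dividing by $\prod_{(i,j)\in\cA_\ell(\la)}(1+R_{ij})$ then reveals a factor of $Q^{\be(\la)}_\la(c\,|\,t)$, so that $\Theta_\la(c\,|\,t) = \prod_{(i,j)\in\cA_\ell(\la)}(1+R_{ij})\,Q^{\be(\la)}_\la(c\,|\,t)$. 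Expanding the remaining product as $\sum_{S\subset\cA_\ell(\la)}\prod_{(i,j)\in S}R_{ij}$ and applying each term to $Q^{\be(\la)}_\la(c\,|\,t)$, using $\prod_{(i,j)\in S}R_{ij}\cdot Q^{\be(\la)}_\la(c\,|\,t) = Q^{\be(\la)}_{R_S\la}(c\,|\,t)$, produces exactly the multiset sum $\sum_{\nu\in\cN}Q^{\be(\la)}_\nu(c\,|\,t)$.

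The whole argument is a commutative formal manipulation, so I do not anticipate a serious obstacle; the only delicate point is justifying the initial truncation to $\Delta^\circ_\ell$, which is a standard feature of raising operators acting on finite-support sequences and follows from the net-flow observation above.
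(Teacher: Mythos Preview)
Your argument is correct and follows essentially the same route as the paper: both proofs rest on the formal raising-operator identities $R^\la_\ell=\prod_{\cC_\ell(\la)}(1+R_{ij})^{-1}\prod_{i<j\leq\ell}(1-R_{ij})$ and $R^\la_\ell=\prod_{\cA_\ell(\la)}(1+R_{ij})\prod_{i<j\leq\ell}\frac{1-R_{ij}}{1+R_{ij}}$, applied to $c^{\be(\la)}_\la$. The only difference is that you spell out the truncation to $\Delta^\circ_\ell$ via your net-flow argument (which is fine, since $c^r_p=0$ for $p<0$), whereas the paper treats this as part of the standing conventions on finitely supported sequences and simply states the identities without further comment.
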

\begin{proof}
The equalities are an immediate consequence of the formal raising
operator identities
\[
R_\ell^\la = \prod_{(i,j)\in \cC_\ell(\la)}(1+R_{ij})^{-1}\prod_{1\leq
  i<j\leq\ell} (1-R_{ij})
\]
and 
\[
R_\ell^\la = \prod_{(i,j)\in\cA_\ell(\la)}(1+R_{ij}) \prod_{1\leq
  i<j\leq\ell}\frac{1-R_{ij}}{1+R_{ij}}.
\]
\end{proof}

\subsection{}
\label{extremecases}
We next examine the extreme cases when
the parts of the partition $\la$ are all small or all large, 
respectively, with respect to $k$.

If $\la_i+\la_j\leq 2k+j-i$ for all $i<j$, then we have 
\begin{equation}
\label{Schurdet}
\Theta_\la(c \, |\, t) = S^{\be(\la)}_\la(c\, |\, t) 
= \det \left(c^{k+i-\la_i}_{\la_i+j-i}\right)_{1\leq i,j \leq \ell}.
\end{equation}
If $\la_i\leq k$ for all $i\geq 1$, the determinant in
(\ref{Schurdet}) is a {\em double Schur $S$-polynomial}.  Suppose that
$x=(x_1,x_2,\ldots)$ is another list of commuting independent
variables.  The specialization of $\Theta_\la(c \, |\, t)$ which maps
each $c_r$ to $e_r(x_1,\ldots,x_k)$ is called a factorial Schur
function, and was studied in \cite{BL} and \cite[I.3, Ex.\ 20]{M}.

\begin{prop}
\label{Aprop}
Suppose that $\la_i+\la_j\leq 2k+j-i$ for all $i<j$. Then we have
\begin{equation}
\label{Afacteq}
\Theta_{\la}(c\, |\, t) = \sum_{\mu\subset\la}S_{\mu}(c)
\det \left(h^{k+i-\la_i}_{\la_i-\mu_j+j-i}(-t)\right)_{1\leq i,j \leq \ell}
\end{equation}
in $\Z[c,t]$, where the sum is over all partitions $\mu$ contained in $\la$.
\end{prop}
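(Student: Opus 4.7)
The plan is to recast the Jacobi--Trudi-type determinant in (\ref{Schurdet}) as a matrix product and apply the Cauchy--Binet formula.

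Because the hypothesis gives $\cC_\ell(\la)=\emptyset$, formula (\ref{Schurdet}) yields
\[
\Ti_\la(c\,|\,t)=\det\bigl(c^{k+i-\la_i}_{\la_i+j-i}\bigr)_{1\leq i,j\leq \ell}.
\]
Expanding each entry via $c^r_p=\sum_{s\geq 0}c_{p-s}\,h^r_s(-t)$ and reindexing $s=\la_i-i-m$ gives
\[
c^{k+i-\la_i}_{\la_i+j-i}=\sum_{m\in\Z}h^{k+i-\la_i}_{\la_i-i-m}(-t)\,c_{m+j},
\]
where the sum is effectively finite because the $h$-factor vanishes for $m>\la_i-i$ and the $c$-factor vanishes for $m<-j$. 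Thus the matrix whose determinant is $\Ti_\la(c\,|\,t)$ factors as $A=BC$ with $B_{im}=h^{k+i-\la_i}_{\la_i-i-m}(-t)$ and $C_{mj}=c_{m+j}$.

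Next I apply the Cauchy--Binet formula to obtain
\[
\Ti_\la(c\,|\,t)=\sum_{S}\det(B_{i,m_a})_{i,a=1}^{\ell}\,\det(c_{m_a+j})_{a,j=1}^{\ell},
\]
summed over $\ell$-element subsets $S=\{m_1<\cdots<m_\ell\}$ of $\Z$ (only finitely many terms are nonzero). I parameterize $S$ by $\mu_a:=m_{\ell+1-a}+a$, so that $\mu_1\geq\mu_2\geq\cdots\geq\mu_\ell$. Reversing the column order of the $B$-submatrix and the row order of the $C$-submatrix each contributes the sign $(-1)^{\binom{\ell}{2}}$, and these cancel. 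After reversal, the $C$-minor is the standard Jacobi--Trudi determinant $\det(c_{\mu_a+j-a})_{a,j}=S_\mu(c)$, which vanishes when $\mu_\ell<0$, and the $B$-minor becomes the desired $\det(h^{k+i-\la_i}_{\la_i-\mu_j+j-i}(-t))_{i,j}$.

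It remains to argue that only partitions $\mu\subset\la$ contribute. Suppose $\mu$ is a partition with $\mu_a>\la_a$ for some $a\in\{1,\ldots,\ell\}$. Consider the $(\ell-a+1)\times a$ block of $H:=(h^{k+i-\la_i}_{\la_i-\mu_j+j-i}(-t))$ with rows $i\in\{a,\ldots,\ell\}$ and columns $j\in\{1,\ldots,a\}$. For such $(i,j)$ we have $\mu_j\geq\mu_a>\la_a\geq\la_i$ and $j-i\leq 0$, so $\la_i-\mu_j+j-i<0$ and hence $H_{ij}=0$. Since $(\ell-a+1)+a>\ell$, a pigeonhole argument shows that every term in the Leibniz expansion of $\det H$ involves at least one entry of this vanishing block, forcing $\det H=0$. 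The main obstacle is this final restriction to $\mu\subset\la$, which is handled cleanly by the zero-block argument; the rest is a direct computation.
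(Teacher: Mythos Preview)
Your argument is correct. The factorization of the Jacobi--Trudi matrix and the application of Cauchy--Binet are sound, the reindexing by $\mu_a=m_{\ell+1-a}+a$ correctly recovers the Schur polynomial $S_\mu(c)$ and the $h$-determinant, and the zero-block argument legitimately eliminates all $\mu\not\subset\la$.

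The paper's proof is organized differently: rather than starting from the determinant (\ref{Schurdet}) and applying Cauchy--Binet, it works at the level of the raising operator expression $\prod_{i<j}(1-R_{ij})\,c^{\be(\la)}_\la$, expands each $c^{\be(\la)}_{R\la}$ as $\sum_{\al\geq 0}c_{R\la-\al}\,h^{\be(\la)}_\al(-t)$, and pulls the raising operator past the $h$-factors to obtain $\sum_{\al\geq 0}S_{\la-\al}(c)\,h^{\be(\la)}_\al(-t)$. It then invokes the straightening rule for $S_{\la-\al}(c)$ (namely $S_{\la-\al}(c)=(-1)^wS_\mu(c)$ when $\la-\al+\delta_\ell=w(\mu+\delta_\ell)$ for some partition $\mu\subset\la$, and $0$ otherwise) to regroup the sum as the Leibniz expansion of the required determinant. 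The two arguments are close cousins---your Cauchy--Binet step is essentially a matrix-level repackaging of the paper's expand-and-antisymmetrize step---but your route is more self-contained in that it never appeals to raising operators or to the straightening law for Schur polynomials, while the paper's route stays within the raising operator formalism used throughout the article and makes the parallel with the proof of Proposition~\ref{Cprop} transparent.
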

\begin{proof}
Set $\delta_\ell = (\ell-1,\ldots,1,0)$.  For any composition $\al$,
we have $S_{\la-\al}(c)=0$ in $\Z[c]$, unless $\la-\al +\delta_\ell =
w(\mu+\delta_\ell)$ for some partition $\mu\subset \la$, in which case
$S_{\la-\al}(c) = (-1)^wS_{\mu}(c)$. For any raising operator $R$, we
have
\[
R\,c^{\be(\la)}_{\la} = 
\sum_{\al\geq 0}c_{R\la-\al}\,h^{\be(\la)}_{\al}(-t) 
= \sum_{\al\geq 0}(R\,c_{\la-\al})\,h^{\be(\la)}_{\al}(-t)
\]
where the sums are over all compositions $\al$.
We therefore obtain
\begin{gather*}
\Theta_{\la}(c\, |\, t) =   \prod_{i<j}(1-R_{ij})\, c^{\be(\la)}_{\la} = 
\sum_{\al\geq 0} S_{\la-\al}(c)\, h^{\be(\la)}_{\al}(-t) \\
= \sum_{\mu\subset\la}S_\mu(c)\sum_{w\in S_\ell}(-1)^w 
h^{\be(\la)}_{\la+\delta_\ell - w(\mu + \delta_\ell)}(-t) 
= \sum_{\mu\subset\la}S_\mu(c)
\det \left(h^{k+i-\la_i}_{\la_i-\mu_j+j-i}(-t)\right)_{1\leq i,j \leq \ell}.
\end{gather*}
\end{proof}

Suppose now that $\la_i>k$ for all nonzero parts $\la_i$. Then we have
\begin{equation}
\label{SchurPf}
\Theta_\la(c\, |\, t) = Q^{\be(\la)}_\la(c\, |\, t) 
= \Pf\left(\frac{1-R_{12}}{1+R_{12}}\,
c^{k+1-\la_i,k+1-\la_j}_{(\la_i,\la_j)}\right)_{i<j}.
\end{equation}
In 2000, Kazarian \cite{Ka} proved a multi-Pfaffian formula for those
equivariant Schubert classes $[X_\la]^T$ on isotropic Grassmannians
where all the subbundles $F_j$ involved in the defining conditions
(\ref{Xlam}) are isotropic. It is easily seen that the Pfaffian
(\ref{SchurPf}) maps to Kazarian's formula under the geometrization
map $\pi_n$ of \S \ref{geomap}.

When $k=0$, the Pfaffian in
(\ref{SchurPf}) becomes the {\em double Schur $Q$-polynomial}
\begin{equation}
\label{dblq}
Q_\la(c\, |\, t) = \Pf\left(\frac{1-R_{12}}{1+R_{12}}\,
c^{1-\la_i,1-\la_j}_{(\la_i,\la_j)}\right)_{i<j}.
\end{equation}
Define the Schur $Q$-functions $q_j(x)$ by the generating series
\[
\prod_{i=1}^{\infty}\frac{1+x_iz}{1-x_iz} 
= \sum_{j=0}^{\infty}q_j(x)z^j.
\]
Then the specialization of $Q_\la(c \, |\, t)$ which maps $c_r$ to
$q_r(x)$ for every $r\geq 1$ may be identified with the factorial
Schur $Q$-functions introduced by Ivanov \cite{I}. The connection
between Ivanov's theory and the equivariant cohomology of the
Lagrangian Grassmanian $\LG(n,2n)$ was pointed out by 
Ikeda \cite{Ik}.

\begin{prop}
\label{Cprop}
Suppose that $\la_i>k$ for $1\leq i\leq \ell$. Then we have
\begin{equation}
\label{Cfacteq}
\Theta_{\la}(c\, |\, t) = \sum_{\mu\subset\la}Q_{\mu}(c)
\det \left(h^{k+1-\la_i}_{\la_i-\mu_j}(-t)\right)_{1\leq i,j \leq \ell}
\end{equation}
in $C^{(k)}[t]$, where the
sum is over all strict partitions $\mu\subset \la$ with $\mu_i>k$ for 
$1\leq i \leq \ell$.
\end{prop}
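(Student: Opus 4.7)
The plan is to mirror the proof of Proposition \ref{Aprop}, replacing the Schur-determinant raising operator with the Pfaffian raising operator and invoking the antisymmetry that holds modulo the classical relations \eqref{relations}. First I would note that because $\la$ is $k$-strict with every nonzero part strictly greater than $k$, the parts $>k$ are distinct and strictly decreasing, so $\la_i\geq\la_j+(j-i)$ for $1\leq i<j\leq\ell$. Therefore $\la_i+\la_j>2k+(j-i)$, which gives $\cC_\ell(\la)=\{(i,j)\in\Delta^\circ:j\leq\ell\}$ and $\be_i(\la)=k+1-\la_i$ for $i\leq\ell$. Equation \eqref{SchurPf} then yields
\[
\Theta_\la(c\,|\,t) \;=\; Q^{\be(\la)}_\la(c\,|\,t) \;=\; \prod_{1\leq i<j\leq\ell}\frac{1-R_{ij}}{1+R_{ij}}\,c^{\be(\la)}_\la.
\]

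Following the proof of Proposition \ref{Aprop}, I would expand $c^{\be(\la)}_\la=\sum_{\al\geq 0}c_{\la-\al}\,h^{\be(\la)}_\al(-t)$, apply the raising operator termwise, and reindex by $\mu:=\la-\al$ to obtain
\[
\Theta_\la(c\,|\,t) \;=\; \sum_{\mu} Q_\mu(c)\,h^{\be(\la)}_{\la-\mu}(-t),
\]
the sum being over integer sequences $\mu$. Since $\be_i(\la)=k+1-\la_i\leq 0$, one has $h^{\be_i(\la)}_r(-t)=e^{\la_i-k-1}_r(-t)$, which vanishes outside $0\leq r\leq \la_i-k-1$. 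Hence the sum restricts to those $\mu$ with $k<\mu_i\leq\la_i$ for every $i$. At this point I invoke Lemma \ref{R12lem}: every $\mu_i$ exceeds $k$, so any two entries sum to more than $2k$, and the adjacent-swap identity $Q_{(\dots,r,s,\dots)}(c)=-Q_{(\dots,s,r,\dots)}(c)$ holds in $C^{(k)}$. Consequently $Q_\mu(c)=0$ whenever two entries of $\mu$ coincide, and $Q_\mu(c)=(-1)^w Q_{\wt\mu}(c)$ when the entries are distinct, where $w$ is the permutation that sorts $\mu$ into the strict partition $\wt\mu$.

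Grouping the sum by $\wt\mu$ and recognising the alternating sum over $S_\ell$ as the Leibniz expansion of a determinant gives
\[
\Theta_\la(c\,|\,t) \;=\; \sum_{\wt\mu} Q_{\wt\mu}(c)\,\det\bigl(h^{k+1-\la_i}_{\la_i-\wt\mu_j}(-t)\bigr)_{1\leq i,j\leq\ell}
\]
in $C^{(k)}[t]$, where $\wt\mu$ ranges over strict partitions of length $\ell$ with $\wt\mu_i>k$. If $\wt\mu\not\subset\la$, choose the minimal $a$ with $\wt\mu_a>\la_a$; since both sequences are strictly decreasing with entries $>k$, the $(\ell-a+1)\times a$ submatrix in rows $a,\dots,\ell$ and columns $1,\dots,a$ has every entry equal to zero (each has $\la_i-\wt\mu_j<0$), forcing the determinant to vanish. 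This restricts the sum to $\wt\mu\subset\la$ and proves the claim. The main obstacle is the third step: the passage from $\Z[c,t]$ to $C^{(k)}[t]$ via Lemma \ref{R12lem} is essential, since without the classical relations \eqref{relations} the Pfaffian $Q_\mu(c)$ is not antisymmetric under permutations of the indices $\mu_i$, and the collapse of the sum to strict partitions with parts $>k$ would fail.
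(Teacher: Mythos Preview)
Your proof is correct and follows essentially the same route as the paper's: expand $c^{\be(\la)}_\la$, apply the Pfaffian raising operator termwise, observe that the vanishing of $e^{\la_i-k-1}_r(-t)$ forces each component $\mu_i>k$, invoke the antisymmetry of $Q_\mu(c)$ in $C^{(k)}$ (the paper cites \cite[Lemma~1.3]{BKT2} for the $\ell$-index statement rather than Lemma~\ref{R12lem}, but these belong to the same circle of identities), and collapse the alternating sum to the determinant. Your final step showing the determinant vanishes when $\wt\mu\not\subset\la$ is correct but unnecessary: since each $\mu_i\leq\la_i$ before sorting and $\la$ is strictly decreasing, the sorted sequence $\wt\mu$ automatically satisfies $\wt\mu_j\leq\la_j$ for all $j$.
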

\begin{proof}
Arguing as in the proof of Proposition \ref{Aprop}, we obtain
\[
\Theta_{\la}(c\, |\, t) =  \sum_{\al\geq 0} Q_{\la-\al}(c)\, h^{\be(\la)}_{\al}(-t)
\]
where the sum is over all compositions $\alpha$. Observe that the
polynomial $h^{\be(\la)}_{\al}(-t)=e^{-\be(\la)}_{\al}(-t)$ vanishes
unless $\al_j\leq -\be_j(\la)=\la_j-k-1$ for each $j$, which implies
that $\la_j-\al_j > k$ for all $j$. Therefore, according to
\cite[Lemma 1.3]{BKT2}, the $Q$-polynomial $Q_{\la-\al}(c)$ is
alternating in the indices $\la_j-\al_j$. It follows that
\[
\Theta_{\la}(c\, |\, t) = \sum_{\mu\subset\la}Q_{\mu}(c)
\sum_{w\in S_\ell}(-1)^w 
e^{-\be(\la)}_{\la- w(\mu)}(-t) = 
\sum_{\mu\subset\la}Q_{\mu}(c)
\det \left(e^{\la_i-k-1}_{\la_i-\mu_j}(-t)\right)_{1\leq i,j \leq \ell}
\]
where the sums over $\mu\subset\la$ are as in the statement of the proposition.
\end{proof}

\subsection{} 
\label{compare}
The general degeneracy locus formulas of \cite[Eqn.\ (26) and
Thm.\ 3]{T2} produce many different answers for each indexing Weyl
group element $w$, depending on a choice of two compatible sequences
${\mathfrak a}$ and ${\mathfrak b}$,
which specify the symmetries of the resulting expression. However,
once this latter choice is fixed, these formulas are {\em uniquely
  determined} (cf.\ \cite[\S 5.3]{T3}). We recall here the formula
given in \cite[Ex.\ 12(b)]{T3} for the equivariant Schubert
classes on symplectic Grassmannians.

The symmetric group $S_n$ is the subgroup of $W_n$ generated by the
transpositions $s_i$ for $1\leq i \leq n-1$; we let $S_\infty :=
\cup_nS_n$ be the corresponding subgroup of $W_\infty$. For every
permutation $u\in S_\infty$, let $\AS_u(t)$ denote the type A Schubert
polynomial of Lascoux and Sch\"utzenberger \cite{LS} indexed by $u$
(our notation follows \cite[\S 5]{T3}). We say that a factorization
$w_\la=uv$ in $W_\infty$ is reduced if
$\ell(w_\la)=\ell(u)+\ell(v)$. In any such factorization, the right
factor $v=w_\mu$ is also $k$-Grassmannian for some $k$-strict
partition $\mu$.

Consider the polynomial
\begin{equation}
\label{fstrep}
\Omega_\la(c\, |\, t):=\sum_{uw_\mu=w_\la}\Ti_\mu(c)\AS_{u^{-1}}(-t)
\end{equation}
where the sum is over all reduced factorizations $uw_\mu=w_\la$ with
$u\in S_\infty$. One can show that the partitions $\mu$ which appear
in (\ref{fstrep}) are all contained in $\la$ and satisfy
$\ell_k(\mu)=\ell_k(\la)=\ell_k$. It is furthermore proved in
\cite{T2} that $\Omega_\la(c\, |\, t)$ represents the stable equivariant
Schubert class $\sigma_\la$ indexed by $\la$. Theorem \ref{mainthm}
implies that $\Omega_\la(c\, |\, t)$ must agree with $\Ti_\la(c\, |\,
t)$ up to the relations (\ref{relations}) among the variables $c_r$ in
$C^{(k)}$ (see Corollary \ref{comp}). We give a direct proof
of this below, in the two extreme cases considered in \S 
\ref{extremecases}.

If $\la_i\leq k$ for all $i$, then $w_\la$ is a Grassmannian element
of $S_\infty$, and hence is {\em fully commutative} in the sense of
\cite{St}. There is a one-to-one correspondence between reduced
factorizations $w_\la=uw_\mu$ and partitions $\mu\subset \la$. For
each such $\mu$, we have $\Theta_\mu(c)=S_\mu(c)$, and the Schubert
polynomial $\AS_{u^{-1}}(-t)$ is a flagged skew Schur polynomial. The
latter may be computed using \cite[Thm.\ 2.2]{BJS}, which proves that
$\AS_{u^{-1}}(-t)= \det
\left(h^{k+i-\la_i}_{\la_i-\mu_j+j-i}(-t)\right)_{1\leq i,j \leq
  \ell}$.  Formula (\ref{Afacteq}) shows that in this case,
$\Omega_\la(c\, |\, t)=\Ti_\la(c\, |\, t)$ as polynomials in
$\Z[c,t]$.

If $\la_i> k$ for $1\leq i\leq \ell$, then $w_\la$ will not be a fully
commutative element of $W_\infty$, in general. However there is a
one-to-one correspondence between reduced factorizations
$w_\la=uw_\mu$ with $u\in S_\infty$ and $k$-strict partitions
$\mu\subset \la$ with $\ell_k(\mu)=\ell_k(\la)=\ell$. For each such
$\mu$, we have $\Theta_\mu(c)=Q_\mu(c)$, and can use e.g. the reduced
words for $w_\la$ and $w_\mu$ given in \cite[Ex.\ 9]{T1} to deduce
that $u$ is fully commutative.  According to \cite[Thm.\ 2.2]{BJS}
again, the Schubert polynomial $\AS_{u^{-1}}(-t)$ is equal to
$\det \left(h^{k+1-\la_i}_{\la_i-\mu_j}(-t)\right)_{1\leq i,j \leq
  \ell}$.  We thus see that $\Omega_\la(c\, |\, t)$ agrees with the
right hand side of formula (\ref{Cfacteq}), and hence $\Omega_\la(c\,
|\, t)=\Ti_\la(c\, |\, t)$ in $C^{(k)}[t]$.

\begin{example}
Let $k=0$, so that the partition $\la$ is strict, and
$\Ti_\la(c)$ and $\Ti_\la(c\, |\, t)$ specialize to the single and
double $Q$-polynomials $Q_\la(c)$ and $Q_\la(c\, |\, t)$,
respectively. Following \cite[Ex.\ 12(b)]{T3}, we have
\begin{equation}
\label{Qeq}
\Omega_\la(c\, |\, t) = \sum_{\mu\subset\la}
Q_\mu(c)\det\left(e^{\la_i-1}_{\la_i - \mu_j}(-t)\right)_{1\leq i,j\leq \ell}
\end{equation}
summed over all strict partitions $\mu\subset\la$ with
$\ell(\mu)=\ell(\la)=\ell$. Taking
$\la=(3,1)$ in (\ref{Qeq}) gives
\[
\Omega_{3,1}(c\, |\, t) = Q_{3,1}(c) - Q_{2,1}(c)(t_1+t_2) = 
(c_3c_1-2c_4)-(c_2c_1-2c_3)(t_1+t_2).
\]
On the other hand, equation (\ref{dblq}) above gives
\begin{align*}
Q_{3,1}(c\, |\, t) &=  \frac{1-R_{12}}{1+R_{12}}\, 
c^{-2,0}_{(3,1)} = 
c_3^{-2}c_1^0 - 2 \, c_4^{-2}c_0^0 \\
&= (c_3-c_2(t_1+t_2)+c_1t_1t_2)c_1 - 
2\,(c_4-c_3(t_1+t_2) + c_2t_1t_2).
\end{align*}
Observe that $Q_{3,1}(c\, |\, t) - \Omega_{3,1}(c\, |\, t) =
(c_1^2-2c_2)t_1t_2$, and thus the two polynomials are not equal in
$\Z[c,t]$. However, since the relation $c_1^2=2c_2$ holds in
$C^{(0)}$, we have agreement in $C^{(0)}[t]$, as expected.
\end{example}

\section{Divided difference operators on $\Z[c,t]$}
\label{dds}

There is an action of $W_\infty$ on $\Z[c,t]$ by ring automorphisms,
defined as follows. The simple reflections $s_i$ for $i>0$ act
by interchanging $t_i$ and $t_{i+1}$ and leaving all the remaining
variables fixed. The reflection $s_0$ maps $t_1$ to $-t_1$, fixes the
$t_j$ for $j\geq 2$, and satisfies, for each $p\geq 0$,
\[
s_0(c_p) = c_p+2\sum_{j=1}^p(-t_1)^jc_{p-j}.
\]
The latter equation can be written in the form 
\begin{equation}
\label{concise}
s_0\left(\sum_{p=0}^{\infty}c_pu^p\right) = 
\frac{1-t_1u}{1+t_1u}\cdot\sum_{p=0}^{\infty}c_pu^p
\end{equation}
where $u$ is a formal variable with $s_i(u)=u$ for each $i$.
It follows that
\[
s_0s_1s_0s_1\left(\sum_{p=0}^{\infty}c_pu^p\right) = 
\frac{(1-t_1u)(1-t_2u)}{(1+t_1u)(1+t_2u)}\cdot
\sum_{p=0}^{\infty}c_pu^p =s_1s_0s_1s_0\left(\sum_{p=0}^{\infty}c_pu^p\right),
\]
and therefore that the braid relations for $W_\infty$ are satisfied on
$\Z[c,t]$.

For every $i\geq 0$, we define the {\em divided difference operator}
$\partial_i$ on $\Z[c,t]$ by
\[
\partial_0f := \frac{f-s_0f}{2t_1}, \qquad
\partial_if := \frac{f-s_if}{t_{i+1}-t_i}, \ \ \ \ \text{if $i\geq 1$}.
\]
The operators $\partial_i$ correspond to the {\em left} divided
differences $\delta_i$ studied in \cite{IMN1} and \cite{IM}. However,
the definition given here is more general than that found in
op.\ cit., since the $\partial_i$ act on the polynomial ring $\Z[c,t]$
rather than on its quotient $C^{(k)}[t]$. Note that for each $i\geq 0$, the 
$\partial_i$ satisfy the Leibnitz rule
\[
\partial_i(fg) = (\partial_if)g+(s_if)\partial_ig.
\]

We next recall from \cite[\S 5.1]{IM} some important identities 
satisfied by these operators, with statements and proofs in our setting.

\begin{lemma}
\label{lem1}
We have
\[
s_i(c_p^r) = 
\begin{cases}
c_p^r & \text{if $r \neq \pm i$}, \\
c_p^{i+1}+t_ic_{p-1}^{i+1} & \text{if $r= i > 0$}, \\
c_p^{-i+1}-t_{i+1}c_{p-1}^{-i+1} & \text{if $r= -i \leq 0$}.
\end{cases}
\]
\end{lemma}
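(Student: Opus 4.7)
The plan is to package the definition of $c_p^r$ into a generating series and let the symmetry (or lack thereof) of the classical elementary and complete symmetric polynomials do the work. Introduce the formal series $C(u) := \sum_{p \geq 0} c_p u^p$ and
$$\Phi_r(u) := \prod_{k=1}^{r}(1+t_k u)^{-1} \ (r \geq 0), \qquad \Phi_r(u) := \prod_{k=1}^{-r}(1-t_k u) \ (r \leq 0),$$
with $\Phi_0(u) = 1$. The generating-series formulas for $h_j^r(-t)$ and $e_j^{-r}(-t)$ then identify $\sum_p c_p^r u^p = C(u)\Phi_r(u)$ in $\Z[c,t][[u]]$, so it suffices to understand how $s_i$ acts on each factor.

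For $i \geq 1$ the reflection $s_i$ fixes every $c_p$, hence $C(u)$, while the symmetry of $\Phi_r(u)$ in $t_1, \ldots, t_{|r|}$ means $s_i$ is trivial on $\Phi_r(u)$ unless exactly one of $t_i, t_{i+1}$ appears as an argument, i.e., unless $|r| = i$. In the two exceptional cases, a direct factorization gives
$$s_i(\Phi_i(u)) = (1+t_i u)\,\Phi_{i+1}(u) \quad \text{and} \quad s_i(\Phi_{-i}(u)) = (1-t_{i+1}u)\,\Phi_{-i+1}(u),$$
and comparing coefficients of $u^p$ in $C(u)\, s_i(\Phi_r(u))$ produces exactly the formulas in clauses two and three.

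For $i = 0$, equation (\ref{concise}) gives $s_0(C(u)) = \frac{1-t_1u}{1+t_1u} C(u)$, and a quick inspection shows $s_0(\Phi_r(u)) = \frac{1+t_1u}{1-t_1u}\,\Phi_r(u)$ whenever $r \neq 0$; the two prefactors cancel and confirm $s_0(c_p^r) = c_p^r$, completing the first clause. In the remaining case $r = -i = 0$, equation (\ref{concise}) reads $s_0(C(u)) = (1-t_1 u)\,\Phi_1(u)\, C(u) = (1-t_1u)\sum_p c_p^1 u^p$, which yields the third-clause identity $s_0(c_p) = c_p^1 - t_1 c_{p-1}^1$.

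No step presents a real obstacle; the only subtlety is keeping straight the sign conventions and the distinction between $t_i$ and $t_{i+1}$ in the two exceptional cases, which the generating-function framework encodes unambiguously in whether the affected factor of $\Phi_r(u)$ is $(1 + t_k u)^{-1}$ or $(1 - t_k u)$.
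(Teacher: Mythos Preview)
Your proof is correct and follows essentially the same approach as the paper: package the $c_p^r$ into the generating series $C(u)\Phi_r(u)$, use the symmetry of $\Phi_r(u)$ in $t_1,\ldots,t_{|r|}$ for the $i\geq 1$ case, and invoke equation~(\ref{concise}) for the action of $s_0$. The paper states the same generating series identity (its equation (\ref{EE})) and carries out the same cancellation for $s_0$, leaving the remaining cases as ``straightforward''; your write-up simply spells those out in full.
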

\begin{proof}
Since $c_p^r$ is symmetric in $(t_1,\ldots,t_{|r|})$ when $r\neq 0$,
the identity $s_i(c_p^r) = c_p^r$ for $r\neq \pm i$ is clear if
$i>0$. For $r>0$, we have
\begin{equation}
\label{EE}
\sum^{\infty}_{p=0}c_p^ru^p = \left(\sum_{i=0}^{\infty}c_iu^i\right)\prod_{j=1}^r
\frac{1}{1+t_ju}
\end{equation}
and we apply $s_0$ to both sides and use (\ref{concise}) to prove that
$s_0(c_p^r)=c_p^r$ for all $p$; the argument when $r<0$ is similar.
The rest of the proof is straightforward.
\end{proof}

\begin{lemma}
\label{lem2}
Suppose that $p,r\in \Z$.

\medskip
\noin
{\em a)} For all $i\geq 0$, we have 
\[
\partial_ic_p^r= 
\begin{cases}
c_{p-1}^{r+1} & \text{if $r=\pm i$}, \\
0 & \text{otherwise}.
\end{cases}
\]
\medskip
\noin
{\em b)} For all $i\geq 1$, we have 
\[
\partial_i(c_p^{-i}c_q^i) = c_{p-1}^{-i+1}c_q^{i+1} +
c_p^{-i+1}c_{q-1}^{i+1}.
\]
\end{lemma}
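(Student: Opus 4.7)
\medskip
\noindent\textbf{Proof plan.}

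For part (a), the plan is a case analysis driven by Lemma \ref{lem1}. If $r\neq \pm i$, then $s_i$ fixes $c_p^r$, so $\partial_i c_p^r = 0$ immediately. For $r = i \geq 1$, Lemma \ref{lem1} gives $s_i(c_p^i) = c_p^{i+1} + t_i\,c_{p-1}^{i+1}$, and Lemma \ref{ctlem}(a) (applied with $r = i+1$) rewrites the untwisted term as $c_p^i = c_p^{i+1} + t_{i+1}\,c_{p-1}^{i+1}$. Subtracting yields $c_p^i - s_i c_p^i = (t_{i+1}-t_i)\,c_{p-1}^{i+1}$, and dividing by $t_{i+1}-t_i$ gives the claim. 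The case $r = -i \leq -1$ is entirely parallel: combine $s_i(c_p^{-i}) = c_p^{-i+1} - t_{i+1}\,c_{p-1}^{-i+1}$ from Lemma \ref{lem1} with the identity $c_p^{-i} = c_p^{-i+1} - t_i\,c_{p-1}^{-i+1}$ coming from Lemma \ref{ctlem}(b) with $r = -i+1$ and the convention $t_{-i}=t_i$. The remaining boundary case $r=i=0$ must be done by hand using the explicit formula for the action of $s_0$ on $c_p$: a direct expansion shows
\[
c_p - s_0(c_p) = -2\sum_{j=1}^p (-t_1)^j c_{p-j} = 2t_1\sum_{\ell=0}^{p-1}(-t_1)^\ell c_{p-1-\ell},
\]
and one recognizes the right-hand side as $2t_1\,c_{p-1}^1$ since $h^1_\ell(-t)=(-t_1)^\ell$.

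For part (b), the plan is to apply the Leibnitz rule for $\partial_i$:
\[
\partial_i(c_p^{-i} c_q^i) = (\partial_i c_p^{-i})\,c_q^i + (s_i c_p^{-i})\,\partial_i c_q^i.
\]
Substituting $\partial_i c_p^{-i}=c_{p-1}^{-i+1}$ and $\partial_i c_q^i=c_{q-1}^{i+1}$ from part (a), and $s_i c_p^{-i} = c_p^{-i+1} - t_{i+1} c_{p-1}^{-i+1}$ from Lemma \ref{lem1}, the claim reduces to
\[
c_{p-1}^{-i+1}(c_q^i - c_q^{i+1}) = t_{i+1}\,c_{p-1}^{-i+1} c_{q-1}^{i+1},
\]
which is exactly Lemma \ref{ctlem}(a) applied to $c_q^{i+1}$, multiplied on the left by $c_{p-1}^{-i+1}$.

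The only step that requires any care is the $r=i=0$ case of part (a), since the formula for $s_0$ on $c_p$ is a power-series expansion rather than the two-term shift that appears for $i\geq 1$; all other cases are essentially mechanical applications of Lemmas \ref{lem1} and \ref{ctlem}. The main conceptual point, and the reason part (b) works out cleanly, is that the two reflection formulas in Lemma \ref{lem1} are exactly the $s_i$-twists of the two recursion formulas in Lemma \ref{ctlem}, so the divided difference $\partial_i$ measures precisely the failure of $c_p^{\pm i}$ to satisfy those recursions in the ``wrong'' variable.
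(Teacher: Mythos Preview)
Your proof is correct. Part (b) is identical to the paper's argument: Leibnitz rule, substitute from part (a) and Lemma~\ref{lem1}, then regroup using Lemma~\ref{ctlem}(a). For part (a), the paper instead computes $\partial_i$ on the generating series $\sum_p c_p^r u^p$ via equation~(\ref{EE}) (and its analogues for $r\leq 0$), whereas you work term-by-term: you rewrite both $c_p^{\pm i}$ and $s_i(c_p^{\pm i})$ in terms of $c_\bullet^{\pm i+1}$ using Lemmas~\ref{ctlem} and~\ref{lem1}, so that their difference is visibly $(t_{i+1}-t_i)\,c_{p-1}^{\pm i+1}$. This is a mild but genuine methodological difference. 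Your approach is more elementary and makes transparent the observation you flag at the end --- that the reflection formulas in Lemma~\ref{lem1} are the $s_i$-twists of the recursions in Lemma~\ref{ctlem} --- which is exactly why the subtraction collapses. The paper's generating-function argument is slightly slicker and handles all $p$ at once, but requires unpacking the action of $s_i$ and $\partial_i$ on the product form~(\ref{EE}). Both routes are short; yours has the advantage of reusing only results already stated in the paper.
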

\begin{proof}
For part (a), observe that if $r \neq \pm i$, then $\partial_ic_p^r=0$
by Lemma \ref{lem1}. If $r=i>0$, then we compute easily using (\ref{EE})
that
\[
\partial_i\left(\sum_{p=0}^\infty c_p^r u^p \right) = 
\left(\sum_{p=0}^\infty c_p^r u^{p+1} \right)\prod_{j=1}^{r+1}\frac{1}{1+t_ju}
\]
from which the desired result follows. Work similarly when $r=i=0$ or 
$r=-i<0$. 

For part (b), we use the Leibnitz rule and Lemmas \ref{ctlem} and \ref{lem1}
to compute
\begin{align*}
\partial_i(c_p^{-i}c_q^i) &= \partial_i(c_p^{-i})c_q^i + s_i(c_p^{-i}) 
\partial_i(c_q^i) \\
&= c_{p-1}^{-i+1}c_q^i + (c_p^{-i+1}-t_{i+1}c_{p-1}^{-i+1})c_{q-1}^{i+1} \\
&= c_{p-1}^{-i+1}(c_q^i - t_{i+1}c_{q-1}^{i+1}) + c_p^{-i+1}c_{q-1}^{i+1} \\
&= c_{p-1}^{-i+1}c_q^{i+1} + c_p^{-i+1}c_{q-1}^{i+1}.
\end{align*}
\end{proof}

The following result was proved differently in \cite{IM}, working in a
ring ${\mathcal R}^{(k)}_{\infty}$ which is isomorphic to
$C^{(k)}[t]$, and not in the polynomial ring $\Z[c,t]$.

\begin{prop}
\label{uniq}
Let $\la$ and $\mu$ be $k$-strict partitions such that 
$|\la|=|\mu|+1$ and $w_\la=s_iw_{\mu}$ for some simple
reflection $s_i\in W_\infty$. Then we have
\[
\partial_i\Ti_\la(c\,|\, t)  = \Ti_{\mu}(c\,|\, t)
\]
in $\Z[c,t]$.
\end{prop}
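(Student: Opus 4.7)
The strategy is to compute $\partial_i \Ti_\la(c\,|\,t)$ directly from the raising-operator definition $\Ti_\la = R^\la\, c^{\be(\la)}_\la$. The essential observation is that $\partial_i$ commutes with each basic raising operator $R_{ab}$: since $R_{ab}$ only shifts the subscripts of a noncommutative monomial $c^\be_\al$, while by Lemmas \ref{lem1} and \ref{lem2} the operator $\partial_i$ acts separately on superscripts and on the relevant subscript, these commute. Thus $\partial_i \Ti_\la = R^\la\, \partial_i c^{\be(\la)}_\la$, and we reduce to computing $\partial_i c^{\be(\la)}_\la$ by the Leibnitz rule. By Lemmas \ref{lem1} and \ref{lem2}(a), the only factors $c^{\be_j(\la)}_{\la_j}$ that contribute are those with $\be_j(\la)\in\{\pm i\}$; all others are $s_i$-invariant and annihilated by $\partial_i$, so they pass freely through the Leibnitz expansion.

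Via the bijection $\la\mapsto w_\la$ and the formulas (\ref{equtn})--(\ref{css}), the hypothesis $w_\la=s_iw_\mu$ with $|\la|=|\mu|+1$ determines the set $J:=\{j:\be_j(\la)=\pm i\}$. A case analysis of how $s_i$ acts on a $k$-Grassmannian one-line notation (subject to $\ell(w_\la)=\ell(w_\mu)+1$) yields $|J|\in\{1,2\}$. When $|J|=1$, say $J=\{h\}$, one verifies that $\mu=\la-\epsilon_h$, $\be(\mu)=\be(\la)+\epsilon_h$, and $\cC(\la)=\cC(\mu)$; the last equality holds because the pairs involving row $h$ have their sums $w(k+a)+w(k+b)$ shifted by $\pm 1$, yet cannot cross zero since the borderline values $\pm i,\pm(i+1)$ are absent from other positions of $w_\mu$ (absolute values in a signed permutation being distinct). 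Hence $R^\la=R^\mu$, and Lemma \ref{lem2}(a) immediately yields $\partial_i c^{\be(\la)}_\la=c^{\be(\mu)}_\mu$, whence $\partial_i\Ti_\la=\Ti_\mu$.

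When $|J|=2$ (only for $i\geq 1$, and precisely when both $\overline{i+1}$ and $i$ appear in positions $\geq k+1$ of $w_\la$), write $J=\{h,h'\}$ with $h<h'$, $\be_h(\la)=-i$, $\be_{h'}(\la)=i$; then $\mu=\la-\epsilon_h$ and $\be(\mu)=\be(\la)+\epsilon_h+\epsilon_{h'}$. The calculation of $\partial_i(c^{-i}_{\la_h}\cdot X\cdot c^i_{\la_{h'}})$, where $X$ is the $s_i$-fixed product of factors at positions $h<l<h'$, proceeds as in Lemma \ref{lem2}(b) (the intermediate $X$ factors out cleanly because $s_iX=X$ and $\partial_iX=0$) to give
\[
\partial_i c^{\be(\la)}_\la \;=\; c^{\be(\mu)}_\mu + c^{\be(\mu)}_{R_{h,h'}\mu} \;=\; (1+R_{h,h'})\,c^{\be(\mu)}_\mu.
\]
The analogous case analysis using (\ref{Cweq}) shows $\cC(\la)=\cC(\mu)\cup\{(h,h')\}$, so $R^\la(1+R_{h,h'})=R^\mu$, yielding $\partial_i\Ti_\la=R^\mu c^{\be(\mu)}_\mu=\Ti_\mu$. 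The main obstacle is this combinatorial case analysis, especially verifying $\cC(\la)\setminus\cC(\mu)=\{(h,h')\}$ in the $|J|=2$ case; once this is settled, the algebra concludes cleanly via Lemma \ref{lem2}(b) and the factorization of $R^\la$.
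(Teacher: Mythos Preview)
Your proposal is correct and follows essentially the same approach as the paper's proof. The paper likewise splits into the cases $|J|=1$ (its cases (a), (b), (c), corresponding to $w_\la$ of the form $(\cdots\ov{1}\cdots)$, $(\cdots i{+}1\cdots i\cdots)$, or $(\cdots i\cdots\ov{i{+}1}\cdots)$) and $|J|=2$ (its case (d), $w_\la=(\cdots\ov{i{+}1}\cdots i\cdots)$), invokes Lemma~\ref{lem2}(a),(b) via the Leibnitz rule exactly as you do, and concludes in case~(d) from the identity $R^\la(1+R_{pq})=R^\mu$; your phrasing ``$\partial_i$ commutes with raising operators'' is just the paper's computation of $\partial_i c^{\be(\la)}_\al$ for arbitrary $\al$, repackaged.
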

\begin{proof}
Let $w=w_\la$, $\be=\be(\la)$, and
$\be'=\be(\mu)$. Following \cite[Lemmas 3.4, 3.5]{IM}, there are 4
possible cases for $w$, discussed below. In each case, we have
$\mu\subset\la$, so that $\mu_p=\la_p-1$ for some $p\geq 1$ and
$\mu_j=\la_j$ for all $j\neq p$, and the properties listed are an easy
consequence of equations (\ref{equtn}), (\ref{Cweq}), and (\ref{css}).
 
\medskip
\noin (a) $w = (\cdots \ov{1} \cdots)$. In this case we have $i=0$,
$\cC(\la)=\cC(\mu)$, $\be_p=i$, $\be'_p= i + 1$, while $\be_j=\be'_j$
for all $j \neq p$.

\medskip
\noin
(b) $w = (\cdots i+1 \cdots i \cdots)$.  In this case 
$\cC(\la)=\cC(\mu)$, $\be_p=i$, $\be'_p=i+1$, 
and $\be_j=\be'_j$ for all $j \neq p$.

\medskip
\noin (c) $w = (\cdots i \cdots \ov{i+1} \cdots)$. In this case
$\cC(\la)=\cC(\mu)$, $\be_p = -i$ and $\be'_p=-i+1$,
and $\be_j=\be'_j$ for all $j \neq p$.

\medskip
\noin (d) $w = (\cdots \ov{i+1} \cdots i \cdots)$. In this case
$\cC(\la)=\cC(\mu)\cup\{(p,q)\}$, where $w_{k+p}=-i-1$ and
$w_{k+q}=i$. It follows that $\be_p=-i$ and $\be_q=i$. We see
similarly that $\be'_p=-i+1$ and $\be'_q=i+1$, while $\be_j=\be'_j$
for all $j \notin \{p,q\}$.

\medskip
In cases (a), (b), or (c), it follows using the Leibnitz rule and
Proposition \ref{lem2}(a) that for any integer sequence
$\al=(\al_1,\ldots,\al_\ell)$, we have
\begin{align*}
\partial_i c^{\be(\la)}_\al &=
c^{(\be_1,\ldots,\be_{p-1})}_{(\al_1,\ldots,\al_{p-1})}
\left(\partial_i(c^{\be_p}_{\al_p})c^{(\be_{p+1},\ldots,\be_\ell)}_{(\al_{p+1},\ldots,\al_\ell)}
+s_i(c^{\be_p}_{\al_p})\partial_i
(c^{(\be_{p+1},\ldots,\be_\ell)}_{(\al_{p+1},\ldots,\al_\ell)})\right)
\\ &=c^{(\be_1,\ldots,\be_{p-1})}_{(\al_1,\ldots,\al_{p-1})}
\left(c^{\be_p+1}_{\al_p-1}c^{(\be_{p+1},\ldots,\be_\ell)}_{(\al_{p+1},\ldots,\al_\ell)}
+s_i(c^{\be_p}_{\al_p})\cdot 0\right)
=c^{(\be_1,\ldots,\be_p+1,\ldots,\be_\ell)}_{(\al_1,\ldots,\al_p-1,\ldots,\al_\ell)}
= c^{\be(\mu)}_{\al-\epsilon_p}.
\end{align*}
Since $\la-\epsilon_p=\mu$,
we deduce that if $R$ is any raising operator, then
\[
\partial_i R \,c^{\be(\la)}_\la = \partial_i c^{\be(\la)}_{R\la} = 
c^{\be(\mu)}_{R\la-\epsilon_p} = R \,c^{\be(\mu)}_\mu.
\]
As $R^\la= R^\mu$, we conclude that 
\[
\partial_i \Ti_\la(c\,|\, t)  = \partial_i R^\la c^{\be(\la)}_\la = 
R^\mu c^{\be(\mu)}_\mu = \Ti_\mu(c\,|\, t).
\]

In case (d), it follows from the Leibnitz rule as in the proof of 
Proposition \ref{lem2}(b) that for any integer sequence
$\al=(\al_1,\ldots,\al_\ell)$, we have
\begin{align*}
\partial_i c^{\be(\la)}_\al &= \partial_i
c^{(\be_1,\ldots,-i,\ldots,i,\ldots,\be_{\ell})}_{(\al_1,\ldots,\al_p,\ldots,\al_q,
  \ldots,\al_\ell)} \\ &=
c^{(\be_1,\ldots,-i+1,\ldots,i+1,\ldots,\be_{\ell})}_{(\al_1,\ldots,\al_p-1,\ldots,\al_q,\ldots,\al_\ell)}
+
c^{(\be_1,\ldots,-i+1,\ldots,i+1,\ldots,\be_{\ell})}_{(\al_1,\ldots,\al_p,\ldots,\al_q-1,\ldots,\al_\ell)}
= c^{\be(\mu)}_{\al-\epsilon_p} +
c^{\be(\mu)}_{\al-\epsilon_q}.
\end{align*}
Since $\la-\epsilon_p=\mu$, we deduce that if $R$ is any raising
operator, then
\[
\partial_i R \,c^{\be(\la)}_\la = \partial_i c^{\be(\la)}_{R\la} = 
c^{\be(\mu)}_{R\la-\epsilon_p} + c^{\be(\mu)}_{R\la-\epsilon_q} = 
R \,c^{\be(\mu)}_\mu + RR_{pq}\,c^{\be(\mu)}_\mu.
\]
As $R^\la+R^\la R_{pq} = R^\mu$, we conclude that
\[
\partial_i \Ti_\la(c\,|\, t)  = 
\partial_i R^\la c^{\be(\la)}_\la = 
R^\la c^{\be(\mu)}_\mu +R^\la R_{pq}c^{\be(\mu)}_\mu = 
R^\mu c^{\be(\mu)}_\mu = \Ti_\mu(c\,|\, t).
\]
\end{proof}

\section{The proof of Theorem \ref{mainthm}}
\label{gdds}

\subsection{The geometrization map}
\label{geomap}
Let  
\[
0 \to E'\to E \to E''\to 0
\]
be the tautological sequence (\ref{ses}) of vector bundles over
$\IG(n-k,2n)$, and define the subbundles $F_j$ of $E$ for $0\leq j
\leq 2n$ as in the introduction. Let $IM_n:=ET_n\times^{T_n}\IG$
denote the Borel mixing space for the action of the torus $T_n$ on
$\IG$. The $T_n$-equivariant vector bundles $E',E,E'',F_j$ over $\IG$
induce vector bundles over $IM_n$. Their Chern classes in
$\HH^*(IM_n,\Z)=\HH^*_{T_n}(\IG(n-k,2n))$ are called {\em equivariant 
Chern classes} and denoted by $c_p^T(E')$, $c^T_p(E)$, etc.

Recall that $c^T_p(E-E'-F_j)$ for $p\geq 0$ is defined by the total
Chern class equation
$$c^T(E-E'-F_j):=c^T(E)c^T(E')^{-1}c^T(F_j)^{-1}.$$ Let
$\tm_i:=-c^T_1(F_{n+1-i}/F_{n-i})$ for $1\leq i \leq n$.  Following
\cite[\S 10]{IMN1} and \cite[Thm.\ 3]{T2}, we define the {\em
  geometrization map} $\pi_n$ as the $\Z[t]$-algebra homomorphism
\[
\pi_n : C^{(k)}[t] \to \HH^*_{T_n}(\IG(n-k,2n))
\]
determined by setting
\begin{gather*}
\pi_n(c_p):= c^T_p(E-E'-F_n) \ \ \text{for all} \ p, \\ 
\pi_n(t_i):= \begin{cases}
\tm_i & \text{if $1\leq i\leq n$}, \\
0 & \text{if $i>n$}. \end{cases}
\end{gather*}
Since $\tm_1\ldots,\tm_r$ are the (equivariant)
Chern roots of $F_{n+r}/F_n$ for $1\leq r \leq n$, it follows that
\begin{equation}
\label{geomeq}
\pi_n(c^r_p) = \sum_{j=0}^pc^T_{p-j}(E-E'-F_n)h^r_j(-\tm)
= c^T_p(E-E'-F_{n+r})
\end{equation}
for $-n \leq r \leq n$. This can be extended to $r\in \Z$ if we set
$F_j=F_{2n}=E$ for $j>2n$ and $F_j=0$ for $j<0$. For example, if $\la$
is any partition in $\cP(k,n)$, then we have
\begin{equation}
\label{Kaz}
\pi_n(\Ti_\la(c\, |\, t)) = \pi_n(R^\la\, c^{\be(\la)}_\la) =
R^\la\, c^T_\la(E-E'-F_{n+\be(\la)}),
\end{equation}
in agreement with formula (\ref{genKaz}).

The embedding of $W_n$ into $W_{n+1}$ defined in \S \ref{trans}
induces maps of equivariant cohomology rings
$\HH^*_{T_{n+1}}(\IG(n+1-k,2n+2)) \to \HH^*_{T_n}(\IG(n-k,2n))$ which
are compatible with the morphisms $\pi_n$. We thus obtain an induced
$\Z[t]$-algebra homomorphism
\[
\pi:C^{(k)}[t]\to \IH_T(\IG_k),
\]
which we will show has the properties listed in Theorem \ref{mainthm}.

\subsection{The equivariant Schubert class of a point}
\label{pointclass}

Fix a rank $n$ and let $$\la_0:=(n+k,n+k-1,\ldots,2k+1)$$ be the
$k$-strict partition associated to the $k$-Grassmannian element of
maximal length in $W_n$. In this short section we discuss three different
proofs that 
\begin{equation}
\label{pteq}
\pi_n(\Ti_{\la_0}(c\, |\, t)) = [X_{\la_0}]^{T_n}.
\end{equation}

Equation (\ref{SchurPf}) gives
\[
\Ti_{\la_0}(c\, |\, t) = Q_{\la_0}^{(1-n,2-n,\ldots,-k)}(c\, |\, t)
\]
and by applying (\ref{geomeq}) we obtain that
\begin{equation}
\label{Kresult}
\pi_n(\Ti_{\la_0}(c\, |\,t)) = Q_{\la_0}(E-E'-F_{(1,2,\ldots,n-k)}).
\end{equation}
Observe now that the right hand side of (\ref{Kresult}) coincides with
Kazarian's multi-Pfaffian formula \cite[Thm.\ 1.1]{Ka} for the
cohomology class of the degeneracy locus which correponds to
$[X_{\la_0}]^{T_n}$ (compare with Corollary \ref{maincor}). Therefore
(\ref{pteq}) follows directly from a known result, which is proved
geometrically in op.\ cit.

A second proof of (\ref{pteq}) is obtained by using the equality 
\begin{equation}
\label{2eq}
\Ti_{\la_0}(c\, |\, t) =
\sum_{uw_\mu=w_{\la_0}}Q_\mu(c)\AS_{u^{-1}}(-t) =\Omega_{\la_0}(c\,
|\, t)
\end{equation}
in $C^{(k)}[t]$, which is shown in \S \ref{compare}. We then appeal to
\cite[Eqn.\ (26) and Thm.\ 3]{T2}, which, in this special situation,
give the equality $\pi_n(\Omega_{\la_0}(c\, |\,
t))=[X_{\la_0}]^{T_n}$. Note that the proof of this in op.\ cit.\ uses the
fact that the single $Q$-polynomials $Q_\mu(c)$ in (\ref{2eq})
represent the corresponding cohomological Schubert classes on
$\IG$. This last result is a special case of the main theorem of
\cite{BKT2}.

Finally, a third proof of (\ref{pteq}) is provided by Ikeda and
Matsumura in \cite[\S 8.2]{IM}, starting from the Pfaffian formula
\cite[Thm.\ 1.2]{IMN1} for the equivariant Schubert class of a point
on the complete symplectic flag variety $\Sp_{2n}/B$. From this, using
the left divided differences, they derive a Pfaffian formula for the
top equivariant Schubert class on any symplectic partial flag variety.
In particular, one recovers the Pfaffian $\Ti_{\la_0}(c\, |\,t)$ which
represents the class $[X_{\la_0}]^{T_n}$.

\subsection{Proof of Theorem \ref{mainthm}} We have shown in
Proposition \ref{basisthm} that the $\Ti_\la(c\, |\, t)$ for $\la$
$k$-strict form a $\Z[t]$-basis of $C^{(k)}[t]$. Following \cite[\S
3.4]{IM}, for any $k$-strict partition $\la\in \cP(k,n)$, write
$w_{\la}w_{\la_0}=s_{a_1}\cdots s_{a_r}$ as a product of simple
reflections $s_{a_j}$ in $W_n$, with $r=|\la_0|-|\la|$. Since
$w_{\la_0}^2=1$, we deduce from Proposition \ref{uniq} that
\begin{equation}
\label{iteratepar}
\Ti_\la(c\,|\, t) = \partial_{a_1} \circ \cdots \circ 
\partial_{a_r}(\Ti_{\la_0}(c\, |\, t))
\end{equation}
holds in $\Z[c,t]$. 

The action of the operators $\partial_i$ on $\Z[c,t]$ induces an
action on the quotient ring $C^{(k)}[t]$. Moreover, the corresponding
left divided differences $\delta_i$ on $H_n:=\HH_{T_n}^*(\IG(n-k,2n))$
from \cite[\S 2.5]{IMN1} are compatible with the geometrization map
$\pi_n:C^{(k)}[t]\to H_n$. According to \cite[Prop.\ 2.3]{IMN1} (see
also \cite[Eqn.\ (62)]{T3}), we have $\delta_i([X_\la]^{T_n})=
[X_{\mu}]^{T_n}$ whenever $|\la|=|\mu|+1$ and $w_\la=s_iw_{\mu}$ for some
simple reflection $s_i$. It follows from this and equations
(\ref{pteq}) and (\ref{iteratepar}) that
\begin{equation}
\label{pineq}
\pi_n(\Ti_\la(c\, |\, t)) = [X_\la]^{T_n}. 
\end{equation}
The fact that $\pi_n(\Ti_\la(c\, |\, t))=0$ whenever
$\la\notin\cP(k,n)$, or equivalently $w_\la\notin W_n$, is now a
consequence of the vanishing property for equivariant Schubert classes
(see e.g.\ \cite[Prop.\ 7.7]{IMN1}).  The induced map
$\pi:C^{(k)}[t]\to \IH_T(\IG_k)$ satisfies $\pi(\Ti_\la(c\, |\,
t))=\sigma_\la$ for all $k$-strict $\la$, and is a $\Z[t]$-algebra
isomorphism because the $\Ti_\la(c\, |\, t)$ and $\sigma_\la$ for $\la$
$k$-strict form $\Z[t]$-bases of the respective algebras.

We are left with proving the last assertion in Theorem \ref{mainthm},
about the presentation of the $\Z[t]$-algebra $H_n$. Let $I_n$ be the
ideal of $C^{(k)}[t]$ generated by the $\Ti_\la(c\, |\, t)$ for
$\la\notin\cP(k,n)$, and $J_n$ be the ideal of $C^{(k)}[t]$ generated
by the relations (\ref{rel1}) and (\ref{rel2}). Since $J_n\subset
I_n$, there is a surjection of $\Z[t]$-algebras $$\psi: C^{(k)}[t]/J_n
\to C^{(k)}[t]/I_n\,.$$ We have established that $H_n\cong
C^{(k)}[t]/I_n$, so it suffices to show that $\Ker(\psi)=0$.

Consider the ideal $I$ of $\Z[t]$ generated by the $t_i$ for all
$i\geq 1$, and let $M:=C^{(k)}[t]/J_n$. When $k\geq 1$, we have
the formal identity
\begin{equation}
\label{ident}
\Ti_{(1^p)}(c) = \det(c_{1+j-i})_{1\leq i,j \leq p} 
\end{equation}
which is a specialization of (\ref{Schurdet}).  We deduce from
(\ref{ident}) and the presentation of $\HH^*(\IG(n-k,2n))$ given in
\cite[Thm.\ 1.2]{BKT1} that the polynomials $\Ti_\la(c)$ for
$\la\in\cP(k,n)$ generate $M/IM$ as a $\Z$-module. It follows from
\cite[Lemma 4.1]{Mi2} that the $\Ti_\la(c\, |\, t)$ for
$\la\in\cP(k,n)$ generate $M$ as a $\Z[t]$-module.  Now suppose that
$\psi$ maps a general element $\sum_\la a_\la(t)\Ti_\la(c\, |\, t)
+J_n$ of $M$ to zero, where $a_\la(t)\in \Z[t]$ and the sum is over
$\la\in \cP(k,n)$. Then $\sum_\la a_\la(t)\Ti_\la(c\, |\, t)\in
I_n$. Since $I_n$ is equal to the $\Z[t]$-submodule of $C^{(k)}[t]$
with basis $\Ti_\la(c\, |\, t)$ for $\la\notin \cP(k,n)$, we conclude
that $a_\la(t)=0$, for all $\la$. This completes the proof of Theorem
\ref{mainthm}, and Corollary \ref{maincor} follows from 
equations (\ref{Kaz}) and (\ref{pineq}).

\medskip
According to \cite{T2}, the polynomials $\Omega_{\la}(c\, |\, t)$ of
\S \ref{compare} represent the stable equivariant Schubert classes
$\sigma_\la$ in $\IH_T(\IG_k)$ under the geometrization map
$\pi$. The next result is therefore an immediate consequence
of Theorem \ref{mainthm}.

\begin{cor}
\label{comp}
Let $\la$ be any $k$-strict partition. Then we have
\begin{equation}
\label{TO}
\Ti_\la(c\, |\, t)=\sum_{uw_\mu=w_\la}\Ti_\mu(c)\AS_{u^{-1}}(-t)
\end{equation}
in the ring $C^{(k)}[t]$, where the sum is over all reduced factorizations 
$uw_\mu=w_\la$ with $u\in S_\infty$.
\end{cor}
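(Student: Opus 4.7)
The plan is to derive this identity as a direct consequence of Theorem \ref{mainthm} combined with the representation result for the polynomials $\Omega_\la(c\,|\,t)$ from \cite{T2}. The two families of polynomials $\Ti_\la(c\,|\,t)$ and $\Omega_\la(c\,|\,t)$ both lie in $C^{(k)}[t]$ and both represent the same stable equivariant Schubert class $\sigma_\la$, so injectivity of the geometrization map forces them to coincide.

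First, I would recall from \S\ref{compare} that $\Omega_\la(c\,|\,t)$ is defined in $C^{(k)}[t]$ by equation (\ref{fstrep}), and that \cite[Thm.~3]{T2} asserts that $\Omega_\la(c\,|\,t)$ represents the stable equivariant Schubert class $\sigma_\la\in\IH_T(\IG_k)$ under the geometrization map $\pi:C^{(k)}[t]\to\IH_T(\IG_k)$. In other words, $\pi(\Omega_\la(c\,|\,t))=\sigma_\la$.

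Next, I would invoke Theorem \ref{mainthm}, which has just been established: $\pi$ is an isomorphism of graded $\Z[t]$-algebras and satisfies $\pi(\Ti_\la(c\,|\,t))=\sigma_\la$ for every $k$-strict partition $\la$. Consequently,
\[
\pi\bigl(\Ti_\la(c\,|\,t)-\Omega_\la(c\,|\,t)\bigr)=\sigma_\la-\sigma_\la=0.
\]
Since $\pi$ is injective, this yields the identity (\ref{TO}) in $C^{(k)}[t]$.

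There is essentially no obstacle in this argument, since all the substantive work has been done elsewhere: the representability of $\Omega_\la(c\,|\,t)$ was proved in \cite{T2}, and the representability and basis properties of $\Ti_\la(c\,|\,t)$ have just been verified in Theorem \ref{mainthm}. The only thing worth highlighting is that the equality (\ref{TO}) is asserted in the quotient ring $C^{(k)}[t]$ and need not hold in $\Z[c,t]$; the example with $\la=(3,1)$ and $k=0$ in \S\ref{compare} already shows that the two polynomials can differ by a multiple of the relation $c_1^2-2c_2$, so the quotient is essential. This also confirms that the corollary captures precisely the expected compatibility between the two families of Giambelli polynomials modulo the classical relations $I^{(k)}$.
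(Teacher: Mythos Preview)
Your proposal is correct and matches the paper's own argument essentially verbatim: both deduce the corollary immediately from Theorem \ref{mainthm} (which gives $\pi(\Ti_\la(c\,|\,t))=\sigma_\la$ and that $\pi$ is an isomorphism) together with the fact from \cite{T2} that $\pi(\Omega_\la(c\,|\,t))=\sigma_\la$. Your added remark that the equality only holds in $C^{(k)}[t]$ and not in $\Z[c,t]$ is apt and consistent with the example in \S\ref{compare}.
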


We remark that the right hand side of (\ref{TO}) is the unique
expansion of $\Ti_\la(c\, |\, t)$ as a $\Z$-linear combination in 
the product basis $\{\Ti_\mu(c)\AS_u(-t)\}$ of $C^{(k)}[t]$, where 
$\mu$ ranges over all $k$-strict partitions and $u$ lies in $S_\infty$.
It is instructive to write equation (\ref{TO}) in the following way:
\begin{equation}
\label{endeq}
R^{\la}\,c^{\be(\la)}_{\la}=\sum_{uw_\mu=w_\la}(R^{\mu}\,c_{\mu})\AS_{u^{-1}}(-t).
\end{equation}

\end{document}